\theoremstyle{plain}
\newtheorem{theorem}{Theorem}[section]
\newtheorem{lemma}[theorem]{Lemma}
\newtheorem{proposition}[theorem]{Proposition}
\newtheorem{corollary}[theorem]{Corollary}
\theoremstyle{definition}
\theoremstyle{remark}
\newtheorem{remark}[theorem]{Remark}
\def\a{\mathcal{P}_E(G)}
\def\d{\mathcal{D}(G)}
\begin{document}
\title{Nilpotent groups whose Difference graphs have positive genus}


\author[Parveen, Jitender Kumar]{Parveen, Jitender Kumar$^{*}$}
\address{$\text{}^1$Department of Mathematics, Birla Institute of Technology and Science Pilani, Pilani 333031, India}
\email{p.parveenkumar144@gmail.com,jitenderarora09@gmail.com}

\begin{abstract}
The power graph of a finite group $G$ is a simple undirected graph with vertex set $G$ and two vertices are adjacent if one is a power of the other. The enhanced power graph of a finite group $G$ is a simple undirected graph whose vertex set is the group $G$ and two vertices $a$ and $b$ are adjacent if there exists $c \in G$ such that both $a$ and $b$ are powers of $c$.  In this paper, we study the difference graph $\mathcal{D}(G)$ of a finite group $G$ which is the difference of the enhanced power graph and the power graph of $G$ with all isolated vertices removed. We characterize all the finite nilpotent groups $G$ such that the genus (or cross-cap) of the difference graph $\mathcal{D}(G)$ is at most $2$.
 \end{abstract}

\subjclass[2020]{05C25}

\keywords{Enhanced power graph, power graph, nilpotent groups, genus and cross-cap of a graph. \\ *  Corresponding author}

\maketitle
\section{Historical Background and Main result}
There are number of graphs attached to groups, in particular: Cayley graphs, commuting graphs, power graphs, enhanced  power graph etc. These graphs have been studied extensively by various researchers because of their vast applications (see \cite{a.hayat2019novel,kelarev2003graph,kelarev2004labelled,a.kelarev2009cayley}). The \emph{power graph} $\mathcal{P}(G)$ of a finite group $G$ is a simple undirected graph with vertex set $G$ such that two vertices $a$ and $b$ are adjacent if one is a power of the other. Kelarev and Quinn \cite{a.kelarev2004combinatorial} introduced the concept of directed power graph. Topological graph theory is mainly concerned with embedding of a graph into a surface without edge crossings. Its applications lie in electronic printing circuits where the purpose is to embed a circuit, that is, the graph on a circuit board (the surface), without two connections crossing each other, resulting in a short circuit. The problem of determining the genus of a graph is NP-hard \cite{a.Thomassen1989}. Mirzargar \emph{et al.} \cite{a.Mirzargar2012} classified all the finite groups with planar power graphs. Further, Doostabadi \emph{et al.} \cite{a.doostabadi2017}, characterized the finite groups whose power graphs are of (non)orientable genus one. Then all the finite groups with (non)orientable genus two power graphs have been characterized in \cite{a.ma2019power}. The undirected power graphs of groups have been studied in other aspects, see \cite{a.Cameron2010,a.Cameron2011,a.Feng2015metricdimmension,a.powergraphsurvey,a.feng2016rainbow} and references therein. 
The \emph{commuting graph} $\Delta(G)$ of a group $G$ is a simple graph with vertex set $G$ and two distinct vertices $x, y$ are adjacent if $xy = yx$. Clearly, all the  central elements are the dominating vertices in $\Delta(G)$. Moreover, $\Delta(G)$ has been investigated by various researchers by taking its vertex set as non-central elements. Afkhami \emph{et al.} \cite{a.Afkhami2015} characterized all the finite groups whose commuting and noncommuting graphs are planar, projective planar and of genus one, respectively.
Results on the commuting graph associated to groups can be found in \cite{a.britnell2013perfect,a.Haji2019,a.iranmanesh2008,a.kumar2021,a.Mohmoudifar2017,a.Zhai2022} and references therein.  Aalipour \emph{et al.} \cite{a.Cameron2016} characterize the finite group $G$ such that the power graph $\mathcal{P}(G)$ and the commuting graph $\Delta(G)$ are not equal and hence they introduced a new graph between power graph and commuting graph, called enhanced power graph. The \emph{enhanced power graph} $\a$ of a finite group $G$ is a simple undirected graph with vertex set $G$ and two vertices $x$ and $y$ are adjacent if $x,y\in \langle z \rangle$ for some $z\in G$. Equivalently, two vertices $x$ and $y$ are adjacent in $\a$ if and only if $\langle x,y\rangle$ is a cyclic subgroup of $G$. The clique number of enhanced power graph of an arbitrary group $G$ was obtained by Aalipour et al. \cite{a.Cameron2016} in terms of orders of elements of $G$. Bera and Bhuniya \cite{a.Bera2017} proved that $\a$ is planar if and only if order of each element of $G$ is at most $4$. However, all the finite groups having genus one (or two) enhanced power graphs have not been classified so far. For a detailed list of results and open problems on enhanced power graphs of groups, we refer the reader to \cite{a.masurvey2022}. 



From the above definitions, it is easy to observe that the power graph is a spanning subgraph of the enhanced power graph. Also, the enhanced power graph is a spanning subgraph of the commuting graph. Consequently, Aalipour \emph{et al.} {\rm \cite[Question 42]{a.Cameron2016}} motivated the researchers to study the connectedness of the difference graph of the commuting graph and power graph of a group $G$ i.e. the graph with vertex set $G$ in which $x$ and $y$ are adjacent if they commute but neither is a power of the other. Further, Cameron \cite{a.camerongraphdefinedongroups2022} discussed some developments on the difference graph $\Delta(G) - \mathcal{P}(G)$. Moreover, some results on the difference graph of the commuting graph and the enhanced power graph of a group have been discussed in \cite{a.camerongraphdefinedongroups2022}. Motivated by the work of \cite{a.camerongraphdefinedongroups2022}, Biswas \emph{et al.} \cite{a.biswas2022difference} studied the difference graph $\d = \mathcal{P}_{E}(G) - \mathcal{P}(G)$ of enhanced power graph and power graph of a finite group $G$ with all isolated vertices removed. For certain group classes, the connectedness and perfectness of $\d$  has been investigated in \cite{a.biswas2022difference}. Together with the planarity, various forbidden graph classes of $\d$ have been studied in \cite{a.kumar2022difference}. The purpose of this article is to classify all the finite nilpotent groups such that the difference graph $\d$ is of genus (or cross-cap) at most two. If $G$ is a $p$-group, then it is well known that the power graph and enhanced power graph of $G$ are equal. Thus, $\d$ is a null graph, whenever $G$ is a $p$-group. Before providing our main result of this paper, for a finite group $G$, first we define\\
\begin{itemize}
    \item \emph{$G$ satisfies the condition $\mathcal{C}_1$}, if $G\cong P\times \mathbb{Z}_3$, where  $P$ is a $2$-group with exponent $4$. Moreover, $P$ contains two maximal cyclic subgroups $H$ and $K$ of order $4$ such that $|H\cap K|=2$, and the intersection of any other pair of maximal cyclic subgroups of $P$ is trivial.
    
     \item \emph{$G$ satisfies the condition $\mathcal{C}_2$}, if $G\cong P\times \mathbb{Z}_3$, where  $P$ is a $2$-group with exponent $4$. Moreover, $P$ contains four maximal cyclic subgroups $H_1, H_2, H_3$ and $H_4$ of order $4$ such that $|H_1\cap H_2|=|H_3\cap H_4|=2$, and the intersection of any other pair of maximal cyclic subgroups of $P$ is trivial.

      \item \emph{$G$ satisfies the condition $\mathcal{C}_3$}, if $G\cong P\times \mathbb{Z}_3$, where  $P$ is a $2$-group with exponent $4$.  Moreover, $P$ contains three maximal cyclic subgroups $H_1, H_2$ and $H_3$ of order $4$ such that $|H_1\cap H_2\cap H_3|=2$, and the intersection of any other pair of maximal cyclic subgroups of $P$ is trivial.
\end{itemize}
For $1\leq i \leq 3$, if the group $G$ satisfies the condition $\mathcal{C}_i$, then we write it by the group $\mathcal{G}_i$.
The main result of this paper is as follows.
\begin{theorem}{\label{Main}}
    Let $G$ be a nilpotent group which is not a $p$-group and let $\gamma(\d)$ and $\overline{\gamma}(\d)$ be the genus and cross-cap of $\d$, respectively. Then
    \begin{itemize}
        \item[(i)] $\gamma(\d)=1$ if and only if $G$ is isomorphic to one of the following groups: $$\mathbb{Z}_{18}, \ \mathbb{Z}_{20}, \  \mathbb{Z}_2\times \mathbb{Z}_2 \times  \mathbb{Z}_5, \ \mathbb{Z}_{28}, \  \mathbb{Z}_2\times \mathbb{Z}_2 \times  \mathbb{Z}_7, \   \  \mathcal{G}_1.$$
         \item[(ii)] $\gamma(\d)=2$ if and only if $G$ is isomorphic to one of the following groups: $$\mathbb{Z}_{35}, \ \mathbb{Z}_4\times \mathbb{Z}_3 \times  \mathbb{Z}_3, \ \mathbb{Z}_2\times \mathbb{Z}_2 \times  \mathbb{Z}_3 \times  \mathbb{Z}_3,  \ \mathbb{Z}_2\times \mathbb{Z}_2 \times  \mathbb{Z}_{11}, \  \mathbb{Z}_{44}, \   \mathcal{G}_2, \  \mathcal{G}_3.$$
          \item[(iii)] $\overline{\gamma}(\d)=1$ if and only if $G$ is isomorphic to $\mathbb{Z}_{20}$ or $\mathbb{Z}_2\times \mathbb{Z}_2 \times  \mathbb{Z}_5$.
           \item[(iv)]  $\overline{\gamma}(\d)=2$  if and only if $G$ is isomorphic to one of the groups:  $\mathbb{Z}_{18}$, $\mathbb{Z}_{28}$,  $\mathbb{Z}_2\times \mathbb{Z}_2 \times  \mathbb{Z}_7$,    $\mathcal{G}_1$.
    \end{itemize}
\end{theorem}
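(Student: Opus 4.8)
Because $G$ is nilpotent and not a $p$-group, I would first write $G=P_1\times\cdots\times P_r$ as the product of its Sylow subgroups, with $r\ge 2$. The engine of the whole argument is a monotonicity principle: since $x\in\langle y\rangle$ is an absolute condition (it does not depend on the ambient group), while $\langle x,y\rangle$ being cyclic only propagates to larger groups, for every subgroup $H\le G$ the edges of $\mathcal{D}(H)$ form a subgraph of $\d$. Hence both $\gamma(\d)$ and $\overline{\gamma}(\d)$ are monotone under passing to subgroups, and exhibiting a single subgroup $H$ with $\gamma(\mathcal{D}(H))\ge 3$ (resp. $\overline{\gamma}\ge 3$) is enough to discard $G$. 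The plan is thus: (a) compute $\mathcal{D}$ on a handful of building-block groups; (b) use monotonicity to force the candidate groups into a short finite list together with the exponent-$4$ families; and (c) compute $\gamma$ and $\overline{\gamma}$ of each surviving graph exactly, in parallel so that the different thresholds explain why the cross-cap lists are shorter.

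\textbf{Building blocks and reduction.} Working prime-by-prime through the Sylow decomposition, $\langle x,y\rangle$ is cyclic iff $\langle x_p,y_p\rangle$ is cyclic for every $p$, and $x\in\langle y\rangle$ iff $x_p\in\langle y_p\rangle$ for every $p$. A short direct computation then yields the model graphs $\mathcal{D}(\mathbb{Z}_{pq})\cong K_{p-1,q-1}$ and, more generally, identifies $\mathcal{D}(\mathbb{Z}_n)$ with the multipartite graph whose parts are the order classes $d\mid n$ (of size $\varphi(d)$), two parts being completely joined iff $d\nmid d'$ and $d'\nmid d$; in particular $\mathcal{D}(\mathbb{Z}_2\times\mathbb{Z}_2\times\mathbb{Z}_q)\cong K_{3,q-1}$. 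Feeding these into $\gamma(K_{m,n})=\lceil (m-2)(n-2)/4\rceil$ and $\overline{\gamma}(K_{m,n})=\lceil (m-2)(n-2)/2\rceil$ (and the analogous formulas for $K_n$), I would rule out three or more distinct primes (any triple yields a subgroup $\mathbb{Z}_{p_ip_jp_k}\le G$ whose difference graph already has genus $>2$), forcing $r=2$ and $G=P\times Q$; bound the exponent of each Sylow factor by $4$ (e.g. $\mathbb{Z}_8\times\mathbb{Z}_3\cong\mathbb{Z}_{24}$ is too large); bound the larger prime by $11$ (since $\mathbb{Z}_2\times\mathbb{Z}_2\times\mathbb{Z}_{13}\cong K_{3,12}$ exceeds genus $2$); and bound the number of maximal cyclic subgroups of each factor. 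This confines $G$ to finitely many shapes, the only genuinely infinite family being $G=P\times\mathbb{Z}_3$ with $P$ a $2$-group of exponent $4$, handled next.

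\textbf{Exact computation and the conditions $\mathcal{C}_i$.} For the cyclic and elementary-abelian candidates the graph is an explicit small multipartite graph, and I would pin down $\gamma,\overline{\gamma}$ by exhibiting the densest complete bipartite subgraph it contains (giving the lower bound through the formulas above) together with a matching torus or Klein-bottle drawing for the upper bound. The heart of the matter is $G=P\times\mathbb{Z}_3$ with $P$ of exponent $4$. Here a case analysis of the five element-order types shows that the non-isolated part of $\d$ consists of the two order-$3$ elements $A$, the involutions $C$, the order-$4$ elements $D$, and those order-$6$ elements $B'$ whose $2$-part is a square; the edges are $A$ joined completely to $C\cup D$, together with $(y,t)\sim(z,0)$ exactly when $z^2=y$ between $B'$ and $D$. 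Thus the isomorphism type of $\d$, and hence its genus, is governed entirely by the number of order-$4$ maximal cyclic subgroups of $P$ and the pattern in which they share their involutions. Translating the requirements $\gamma=1$ and $\gamma=2$ into this combinatorial data reproduces precisely the intersection conditions $\mathcal{C}_1$, $\mathcal{C}_2$, $\mathcal{C}_3$, and shows that although these describe infinitely many groups they yield only finitely many graphs (for instance $\mathcal{C}_1$ gives a $K_{4,4}$ with a $K_{2,3}$ attached along two vertices, of genus $1$ and cross-cap $2$, while $\mathcal{C}_2$ gives two copies of $K_{4,4}$ glued along two vertices).

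\textbf{Main obstacle.} I expect two hard points. First, for the borderline graphs that are \emph{not} complete bipartite (the multipartite $\mathcal{D}(\mathbb{Z}_n)$ and the $\mathcal{G}_2,\mathcal{G}_3$ graphs described above), the value of $\gamma$ or $\overline{\gamma}$ cannot be read off a single formula: one must prove the lower bound by an Euler-characteristic count that exploits triangle-freeness to bound the number of faces, and then supply an explicit rotation system realizing the matching upper bound. Second, making the reduction airtight requires a finite but delicate sweep over $2$-groups of exponent $4$: one must show that as soon as $P$ has too many order-$4$ maximal cyclic subgroups, or one involution is shared by four or more of them, the $A$ together with $B'$–$D$ structure contains a $K_{4,n}$ with $n\ge 7$ (or a $K_{5,n}$), forcing $\gamma>2$ and $\overline{\gamma}>2$. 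Cleanly separating these threshold cases, and verifying that nothing outside $\mathcal{C}_1,\mathcal{C}_2,\mathcal{C}_3$ survives, is where the bulk of the work lies.
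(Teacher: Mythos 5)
Your skeleton is essentially the paper's: Sylow decomposition, reduction to $r=2$ (the paper's Lemma \ref{lemma 5}), the genus/cross-cap formulas for $K_{m,n}$, and a structural analysis of $G=P\times\mathbb{Z}_3$ with $\mathrm{exp}(P)=4$ culminating in the intersection conditions $\mathcal{C}_1,\mathcal{C}_2,\mathcal{C}_3$ (the paper's Subcase-4.3.3). Your monotonicity principle is exactly the paper's Lemma \ref{induced lemma}, and your description of $\d$ for the family $P\times\mathbb{Z}_3$ (the two order-$3$ vertices joined to all involutions and order-$4$ elements; order-$6$ vertices whose $2$-part is a square attached precisely to its order-$4$ square roots; no other edges) is correct and is what the paper exploits. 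Nevertheless, your reduction step contains genuine gaps.

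First, ``bound the exponent of each Sylow factor by $4$'' is false and contradicts the statement you are proving: $\mathbb{Z}_{18}=\mathbb{Z}_2\times\mathbb{Z}_9$ has $\gamma(\d)=1$ while its Sylow $3$-subgroup has exponent $9$, and every listed group involving a prime $p\geq 5$ ($\mathbb{Z}_{20}$, $\mathbb{Z}_{28}$, $\mathbb{Z}_{35}$, $\mathbb{Z}_{44}$, $\mathbb{Z}_2\times\mathbb{Z}_2\times\mathbb{Z}_p$) has a Sylow factor of exponent at least $5$. The admissible bounds are case-dependent ($\mathrm{exp}(P_1)\leq 4$ only when the odd part is $\mathbb{Z}_3$; when $|P_1|=2$ the odd Sylow subgroup may have exponent $9$, which is precisely how $\mathbb{Z}_{18}$ survives), and establishing them is a large part of the paper's case analysis. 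Second, and more fundamentally, your claim that the bounds confine $G$ to finitely many shapes plus the single infinite family $P\times\mathbb{Z}_3$ cannot be proved by your engine: the families $\mathbb{Z}_2\times\cdots\times\mathbb{Z}_2\times\mathbb{Z}_3$, $\mathbb{Z}_2\times Q$ with $Q$ of prime exponent $q>2$, and $\mathbb{Z}_3\times Q$ with $Q$ of prime exponent $q>3$ are infinite, violate your prime bound and your bound on the number of maximal cyclic subgroups, and yet have \emph{planar} difference graphs; since every subgraph of a planar graph is planar, exhibiting a subgraph of genus $>2$ can never exclude them. The only-if directions of (i)--(iv) still require showing these infinitely many groups have genus $0$, which the paper obtains from its planarity classification (Theorem \ref{PlanarD})---an ingredient missing from your outline. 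Finally, a smaller but real issue: the groups satisfying $\mathcal{C}_i$ do \emph{not} yield only finitely many graphs, because $P$ may contain arbitrarily many involutions and additional pairwise trivially intersecting maximal cyclic subgroups; one must prove that all this extra structure can be inserted into a single face of the core embedding without raising the genus (this is what the paper's insertion of the graphs $G_i$ into the face $F$ accomplishes), and dually the lower bounds for $\mathcal{G}_2,\mathcal{G}_3$ cannot be read off $K_{m,n}$ counts alone but need the face-by-face non-embeddability arguments of the paper's Subcases-4.3.3(b)--(e), as you partly anticipate.
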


\section{Preliminaries}
In this section, we recall the necessary definitions, results  which we need in the sequel of this paper. We also fix our notations in this section. 
Let $G$ be a group. The order of an element $x\in G$ is denoted by $o(x)$ and we denote $\pi _G=\{o(g): g \in G\}$.  By $\langle x, y\rangle$, we mean the subgroup of $G$ generated by $x$ and $y$. The \emph{exponent} $\mathrm{exp}(G)$ of a finite group $G$ is defined as the least common multiple of the orders of all the elements of $G$. For $d\in \pi _G$, $C_d$ denotes the number of cyclic subgroups of order $d$ in $G$.
A cyclic subgroup of a group $G$ is called a \emph{maximal cyclic subgroup} if it is not properly contained in any cyclic subgroup of $G$ other than itself. Note that if $G$ is a cyclic group, then $G$ is  the only maximal cyclic subgroup of $G$. A finite group $G$ is called a \emph{p-group} if $|G|=p^{\alpha}$ for some prime $p$. A finite group $G$ is said to be an EPPO-group if the order of each element of $G$ is of prime power. Otherwise, $G$ is called a non-EPPO-group. The following results are useful for later use.
\begin{lemma}{\label{lemma 3}} Let $G$ be a finite $p$-group with exponent $p^2$. Then either $G$ has exactly one cyclic subgroup of order $p^2$ or $G$ contains at least two cyclic subgroups $M$ and $N$ of order $p^2$ such that $|M\cap N|=p$.
\end{lemma}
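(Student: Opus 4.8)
The plan is to reduce the whole statement to a clean dichotomy about \emph{socles}. Every cyclic group of order $p^2$ has a unique subgroup of order $p$; call it its socle. If $M,N$ are distinct cyclic subgroups of order $p^2$, then $M\cap N$ is a proper subgroup of the cyclic group $M$, so $|M\cap N|\in\{1,p\}$, and $|M\cap N|=p$ holds exactly when $M$ and $N$ have the \emph{same} socle. Since $\exp(G)=p^2$ forces $G$ to contain at least one element of order $p^2$, there is always at least one cyclic subgroup of order $p^2$. Hence I only need to show: if $G$ has more than one cyclic subgroup of order $p^2$, then two of them share a socle (equivalently, the configuration in which all such subgroups have pairwise trivial intersection cannot occur).

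The engine of the argument is that a $p$-group has nontrivial center, so I fix a central element $z$ of order $p$. The key computation is that for any cyclic subgroup $M=\langle a\rangle$ of order $p^2$, centrality of $z$ makes $a$ and $z$ commute, whence $(az)^p=a^pz^p=a^p$ because $z^p=1$. I would extract two observations from this. First, if $z\notin M$, then $N:=\langle az\rangle$ is again cyclic of order $p^2$ (its $p$-th power is $a^p\neq 1$), it is distinct from $M$ since $az\in M$ would force $z\in M$, and it has the same socle $\langle a^p\rangle$ as $M$. Second, if instead $z\in M$, then $\langle z\rangle$, being an order-$p$ subgroup of the cyclic group $M$, must equal the socle of $M$.

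With these in hand the proof finishes by a short case split on the collection $\mathcal C$ of cyclic subgroups of order $p^2$, assumed to have at least two members. If some $M\in\mathcal C$ does not contain $z$, then its companion $\langle az\rangle\in\mathcal C$ shares a socle with it. Otherwise every member of $\mathcal C$ contains $z$, so by the second observation all of them have the common socle $\langle z\rangle$, and any two distinct members then meet exactly in $\langle z\rangle$. In either branch I produce two distinct cyclic subgroups $M,N$ of order $p^2$ whose common socle (of order $p$) lies in $M\cap N$, and properness gives $|M\cap N|=p$. I expect the only genuine subtlety to be the verification that $\langle az\rangle$ is a \emph{new} subgroup rather than a reincarnation of $M$: this is precisely where centrality of $z$ (so the power formula carries no commutator corrections) and the hypothesis $z\notin M$ are both essential. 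It is worth noting that the argument nowhere assumes $G$ is abelian, so no structural classification of $p$-groups of exponent $p^2$ is required.
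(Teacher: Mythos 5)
Your proof is correct and takes essentially the same route as the paper's: both fix a central element $z$ of order $p$, split on whether it lies inside the given cyclic subgroups of order $p^2$ (in which case the intersection already has order $p$), and otherwise replace a generator $a$ by $az$ to manufacture a second cyclic subgroup of order $p^2$ sharing the subgroup $\langle a^p\rangle$ with the first. Your socle formulation is just a cleaner packaging of the identical computation $(az)^p=a^pz^p=a^p$ used in the paper.
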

\begin{proof}
    First note that $|Z(G)|\geq p$. If $G$ has exactly one cyclic subgroup of order $p^2$, then there is nothing to prove. We may now suppose that $G$ has two cyclic subgroups $M$ and $N$ of order $p^2$. Let $x\in Z(G)$ such that $o(x)=p$. If $x\in M\cap N$, then $|M\cap N|=p$. Now assume that $x\notin M$ and $M=\langle y \rangle$. We claim that $o(xy)=p^2$.  Clearly, $(xy)^{p^2}=x^{p^2}y^{p^2}=e$. Consequently,  $o(xy)\vert p^2$. Thus, $o(xy)\in \{1,p,p^2\}$. If $o(xy)=1$, then $x=y^{-1}$; a contradiction. If $o(xy)=p$, then $x^py^p=ey^p=e$, again a contradiction. It follows that $o(xy)=p^2$. If $\langle xy\rangle=\langle y \rangle$, then $xy=y^k$ for some positive integer $k$ and so $x=y^{k-1}$, which is not possible. Thus, $M'=\langle xy \rangle$ is a cyclic subgroup of order $p^2$ in $G$. Moreover, $M\cap M'=\{e,y^p, y^{2p},\ldots , y^{(p-1)p}\}$. Thus, the result holds.
\end{proof}
\begin{theorem}{\rm \cite{a.pgroupberkovi,b.pgroupisaac2006,a.pgroupkulakoff,a.pgroupmiller}}{\label{pgroupclass}}
Let $G$ be a finite $p$-group of exponent $p^{k}$. Assume that $G$ is not cyclic for an odd prime $p$, and for $p=2$ it is neither cyclic nor of maximal class. Then
\begin{enumerate}
    \item[(i)] $C_p \equiv 1+p\left(\bmod \ p^{2}\right)$.
    \item[(ii)] $p \mid C_{p^i}$ for every $2 \leq i \leq k$.
\end{enumerate}
\end{theorem}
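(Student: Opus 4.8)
The plan is to translate both congruences into a single divisibility statement for the counting function
$$\omega_j := \bigl|\{x \in G : x^{p^j} = e\}\bigr|, \qquad j \ge 0,$$
and then to invoke the classical refinement of Frobenius' theorem. Let $a_{p^j}$ denote the number of elements of $G$ of order exactly $p^j$. Then $\omega_0 = 1$ and $a_{p^j} = \omega_j - \omega_{j-1}$, and, since every cyclic subgroup of order $p^j$ accounts for $\varphi(p^j) = p^{j-1}(p-1)$ elements of order $p^j$,
$$C_{p^j} = \frac{a_{p^j}}{p^{j-1}(p-1)} = \frac{\omega_j - \omega_{j-1}}{p^{j-1}(p-1)}.$$
I would begin by recording Frobenius' theorem in the form $p^j \mid \omega_j$ for every $j$ with $p^j \le |G|$ (so for $1 \le j \le k$, as $p^k = \mathrm{exp}(G)$ divides $|G|$).

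The second step is the reduction. For (i), $C_p = (\omega_1 - 1)/(p-1)$, and since $(p-1)(1+p) = p^2 - 1 \equiv -1 \pmod{p^2}$, the assertion $C_p \equiv 1+p \pmod{p^2}$ is equivalent to $\omega_1 \equiv 0 \pmod{p^2}$. For (ii), comparing $p$-adic valuations $v_p$ on both sides of $\omega_i - \omega_{i-1} = p^{i-1}(p-1)C_{p^i}$ and using $p \nmid p-1$ gives $v_p(\omega_i - \omega_{i-1}) = (i-1) + v_p(C_{p^i})$, so that $p \mid C_{p^i}$ is equivalent to $v_p(\omega_i - \omega_{i-1}) \ge i$; because Frobenius already supplies $p^i \mid \omega_i$, this is in turn equivalent to $p^i \mid \omega_{i-1}$. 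Setting $j = i-1$, both parts of the theorem therefore follow from the single claim
$$(\star) \qquad p^{\,j+1} \mid \omega_j \qquad (1 \le j \le k-1),$$
augmented by the trivial case $\mathrm{exp}(G) = p$, where $\omega_1 = |G|$ is divisible by $p^2$ since $G$ is non-cyclic.

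It remains to prove $(\star)$, and this is exactly where the hypotheses enter: $(\star)$ improves Frobenius' bound $p^j \mid \omega_j$ by one extra factor of $p$ for non-cyclic groups, which is the content of Kulakoff's enumeration theorem for odd $p$ together with its analogue for $2$-groups that are not of maximal class. In the abelian case $(\star)$ is a direct computation from $\omega_j = \prod_l p^{\min(j,\lambda_l)}$ (where $G \cong \prod_l \mathbb{Z}_{p^{\lambda_l}}$), and one checks that having at least two cyclic factors is precisely what yields the surplus power of $p$. In general I would argue by induction on $|G|$: pass to $G/Z$ for a central subgroup $Z$ of order $p$, analyse how $\{x : x^{p^j} = e\}$ lies over its image, and use the conjugation class equation $\omega_j \equiv |\{z \in Z(G) : z^{p^j} = e\}| \pmod p$ as a bookkeeping device.

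The main obstacle is precisely this last step. Frobenius' divisibility $p^j \mid \omega_j$ comes for free, but extracting the additional factor of $p$ requires the full Kulakoff argument, and the exceptional $2$-groups of maximal class (dihedral, generalized quaternion, semidihedral) must be genuinely excluded: already for $D_8$ one has $\omega_1 = 6$, which is not divisible by $p^2 = 4$, so $(\star)$ — and with it statement (i) — fails there. Everything else in the proof is the elementary reduction recorded above.
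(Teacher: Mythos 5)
The paper offers no proof of this statement to compare against: it is quoted as a classical result, with citations to Miller, Kulakoff, Berkovich and Isaacs. So your attempt must stand on its own, and it does not. To give credit where due, your elementary reduction is correct and cleanly executed: with $\omega_j=|\{x\in G: x^{p^j}=e\}|$, the identity $\omega_i-\omega_{i-1}=p^{i-1}(p-1)C_{p^i}$, Frobenius' divisibility $p^i\mid\omega_i$, and the invertibility of $p-1$ modulo $p^2$ do show that (i) is equivalent to $p^2\mid\omega_1$ and (ii) to $p^i\mid\omega_{i-1}$, so that the whole theorem follows from your claim $(\star)$: $p^{j+1}\mid\omega_j$ for $1\le j\le k-1$ (together with the trivial exponent-$p$ case). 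Your $D_8$ computation ($\omega_1=6$) also correctly confirms that the maximal-class exclusion is genuinely needed.

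The gap is that $(\star)$ is never proved, and $(\star)$ is not a stepping stone toward the theorem --- it \emph{is} the theorem, namely the Kulakoff-type strengthening of Frobenius for noncyclic $p$-groups (odd $p$) and its analogue for $2$-groups not of maximal class, i.e.\ exactly the result the paper cites. You invoke it by name and then sketch an induction that does not go through as described. The congruence $\omega_j\equiv|\{z\in Z(G):z^{p^j}=e\}| \pmod p$ only yields information modulo $p$, far short of modulo $p^{j+1}$. The passage to a central $Z$ of order $p$ is also more delicate than "bookkeeping": since $z$ is central of order $p$, every coset $xZ$ satisfies $(xz)^{p^j}=x^{p^j}$, so the solution set in $G$ is a union of full cosets and $\omega_j(G)=pN_j$, where $N_j$ counts those cosets; but $N_j$ is only a \emph{subset} count of the solutions in $G/Z$ (a coset $xZ$ with $\bar{x}^{p^j}=\bar{e}$ may satisfy $x^{p^j}=z^m\neq e$), so the inductive hypothesis $p^{j+1}\mid\omega_j(G/Z)$ gives no divisibility for $N_j$. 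Moreover $G/Z$ can be cyclic or of maximal class even when $G$ is not, so the inductive hypothesis can simply fail to apply. Closing these holes is precisely the content of Kulakoff's argument (and of the $p=2$ analysis in Berkovich/Isaacs). As it stands, your proposal is a correct reformulation of the statement plus an appeal to the classical theorem --- legitimate as an explanation of what the cited result says, but not an independent proof.
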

\begin{corollary}{\rm \cite{a.sarkar2022lambda}}{\label{pgroupclasscorollary}}
Let $G$ be a finite $p$-group of exponent $p^{k}$. Then $C_{p^{i}}=1$, for some $1 \leq i \leq k$, if and only if one of the following occurs:
\begin{enumerate}
    \item $G \cong \mathbb{Z}_{p^{k}}$ and $C_{p^{j}}=1$ for all $1 \leq j \leq k$, or
    \item $p=2$ and $G$ is isomorphic to one of the following $2$-groups:
\end{enumerate}
\begin{enumerate}
    \item[(i)] dihedral $2$-group
$$
\mathbb{D}_{2^{k+1}}=\left\langle x, y: x^{2^{k}}=1, y^{2}=1, y^{-1} x y=x^{-1}\right\rangle, \quad(k \geq 1)
$$
where $C_2=1+2^{k}$ and $C_{2^{j}}=1 \text{ for all } (2 \leq j \leq k)$.
\item[(ii)] generalized quaternion $2$-group
$$
\mathbb{Q}_{2^{k+1}}=\left\langle x, y: x^{2^{k}}=1, x^{2^{k-1}}=y^{2}, y^{-1} x y=x^{-1}\right\rangle, \quad(k \geq 2)
$$
where $C_4=1+2^{k-1}$ and $C_{2^{j}}=1$ for all $1 \leq j \leq k$ and $j \neq 2$.
\item[(iii)] semi-dihedral $2$-group
$$
\mathbb{SD}_{2^{k+1}}=\left\langle x, y: x^{2^{k}}=1, y^{2}=1, y^{-1} x y=x^{-1+2^{k-1}}\right\rangle, \quad(k \geq 3)
$$
where $C_2=1+2^{k-1}, C_4=1+2^{k-2}$ and $C_{2^{j}}=1$ for all $3 \leq j \leq k$.
\end{enumerate}
\end{corollary}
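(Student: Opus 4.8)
The plan is to prove both implications, using Theorem~\ref{pgroupclass} for the forward direction and a direct count of cyclic subgroups for the converse. The key observation is that Theorem~\ref{pgroupclass} already isolates exactly the groups that need to be excluded, namely the cyclic groups and, when $p=2$, the groups of maximal class.

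For the $(\Rightarrow)$ direction, I would argue by contrapositive against the structural dichotomy of Theorem~\ref{pgroupclass}. Suppose $G$ is not cyclic, and suppose moreover that $p$ is odd, or that $p=2$ and $G$ is not of maximal class. Then the hypotheses of Theorem~\ref{pgroupclass} are met, so $C_p \equiv 1+p \pmod{p^2}$ and $p \mid C_{p^i}$ for every $2 \le i \le k$. The first congruence forces $C_p \ne 1$ (otherwise $p^2 \mid p$), and the divisibility forces $C_{p^i} \ge p > 1$ for each $i \ge 2$. Hence $C_{p^i} \ne 1$ for all $1 \le i \le k$, contradicting the assumption. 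Therefore, once some $C_{p^i}=1$, either $G$ is cyclic, giving case $(1)$ since $\mathbb{Z}_{p^k}$ has a unique subgroup of each order $p^j$, or $p=2$ and $G$ is a $2$-group of maximal class. Invoking the classical classification of $2$-groups of maximal class, the latter groups are exactly the dihedral, generalized quaternion, and semidihedral $2$-groups, which is case $(2)$.

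For the $(\Leftarrow)$ direction, I would verify the explicit values of $C_{2^j}$ in each family directly from the given presentations. In $\mathbb{D}_{2^{k+1}}$ every element outside $\langle x\rangle$ is an involution, so the cyclic subgroups of order $2^j$ with $2 \le j \le k$ all lie inside $\langle x\rangle$, giving $C_{2^j}=1$, while counting the $2^k$ reflections together with the unique involution of $\langle x\rangle$ yields $C_2=1+2^k$. In $\mathbb{Q}_{2^{k+1}}$ there is a unique involution $x^{2^{k-1}}=y^2$, so $C_2=1$, and every element outside $\langle x\rangle$ has order $4$; grouping these elements into the order-$4$ cyclic subgroups they generate (each such subgroup containing two of them) gives $C_4=1+2^{k-1}$, and $C_{2^j}=1$ for the remaining $j$. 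The semidihedral case is handled analogously by separating the coset representatives $x^i y$ according to their order, producing $C_2=1+2^{k-1}$, $C_4=1+2^{k-2}$, and $C_{2^j}=1$ for $3 \le j \le k$. In every case some $C_{2^i}$ equals $1$, which establishes the converse.

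The routine-but-delicate part is the converse counting, in particular the semidihedral computation, where one must correctly determine which representatives $x^i y$ are involutions and which have order $4$ under the twist $y^{-1}xy = x^{-1+2^{k-1}}$. The forward direction is conceptually the crux but short, since it reduces, via Theorem~\ref{pgroupclass}, to the known structure theorem for $2$-groups of maximal class.
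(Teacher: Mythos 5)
This corollary is not proved in the paper at all: it is imported verbatim from \cite{a.sarkar2022lambda} and used as a black box (the paper only remarks that, together with Theorem \ref{pgroupclass}, it yields Lemma \ref{lemma 4}), so there is no in-paper argument to measure your proposal against; what you have written is a self-contained derivation of a quoted background result. Your reconstruction is essentially correct and is the natural one. In the forward direction, two points should be made explicit: the divisibility $p \mid C_{p^i}$ forces $C_{p^i}\geq p$ only because $C_{p^i}\geq 1$, which holds since $\mathrm{exp}(G)=p^k$ supplies an element of order $p^i$ for every $1\leq i\leq k$; and the passage from ``$p=2$ and $G$ of maximal class'' to the three listed families is itself a classical classification theorem that carries the real weight of the argument and needs its own citation --- Theorem \ref{pgroupclass} does not deliver it. Your converse counts are correct, including the delicate semidihedral computation $(x^iy)^2=x^{i\cdot 2^{k-1}}$, which makes $x^iy$ an involution for even $i$ and of order $4$ for odd $i$, giving $C_2=1+2^{k-1}$ and $C_4=1+2^{k-2}$. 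One caveat: your closing claim that ``in every case some $C_{2^i}$ equals $1$'' fails in the degenerate dihedral case $k=1$, where $\mathbb{D}_4\cong \mathbb{Z}_2\times \mathbb{Z}_2$ has $C_2=3$ and no index $j$ with $2\leq j\leq k$ exists; this is a defect of the statement as quoted (the dihedral family genuinely satisfies the equivalence only for $k\geq 2$) rather than of your strategy, but a careful write-up should either exclude $k=1$ there or flag the exception.
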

In view of Theorem \ref{pgroupclass} and Corollary \ref{pgroupclasscorollary}, we have the following lemma:
\begin{lemma}{\label{lemma 4}}
    Let $G$ be a finite $p$-group with exponent $p^2$ and $G$ contains exactly one cyclic subgroup of order $p^2$. Then the following holds:
    \begin{itemize}
        \item[(i)] If $p=2$, then $G$ is isomorphic to $\mathbb{Z}_4$ or $\mathbb{D}_8$.
        \item[(ii)] If $p>2$, then $G$ is isomorphic to $\mathbb{Z}_{p^2}$.
    \end{itemize}
\end{lemma}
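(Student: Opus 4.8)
The plan is to apply Corollary \ref{pgroupclasscorollary} directly, since the hypotheses of Lemma \ref{lemma 4} are precisely tailored to it. Because $G$ has exponent $p^2$, we are in the case $k=2$, and the assumption that $G$ contains exactly one cyclic subgroup of order $p^2$ says exactly that $C_{p^2}=1$. In particular $C_{p^k}=1$ with $k=2$, so the hypothesis ``$C_{p^i}=1$ for some $1\leq i \leq k$'' of Corollary \ref{pgroupclasscorollary} is met (take $i=2$). Hence $G$ must be one of the groups listed there, and the whole argument reduces to retaining only those candidates for which the order-$p^2$ count is genuinely $1$.

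First I would dispose of the odd case. If $p>2$, the only possibility allowed by Corollary \ref{pgroupclasscorollary} is $G\cong \mathbb{Z}_{p^k}=\mathbb{Z}_{p^2}$, since each of the remaining families occurs only for $p=2$. This settles part (ii).

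For $p=2$ with $k=2$ I would run through the four candidates and keep only those with $C_4=C_{2^2}=1$. The cyclic group $\mathbb{Z}_4$ qualifies by the first alternative. For the dihedral group $\mathbb{D}_{2^{k+1}}$ with $k=2$, the stated formula $C_{2^j}=1$ for $2\leq j\leq k$ gives $C_4=1$, so $\mathbb{D}_8$ is retained. For the generalized quaternion group with $k=2$, the formula $C_4=1+2^{k-1}=3$ shows $C_4\neq 1$, so $\mathbb{Q}_8$ is excluded by the hypothesis (even though it does satisfy $C_2=1$). Finally, the semi-dihedral family requires $k\geq 3$ and hence exponent at least $8$, so it cannot occur when $\mathrm{exp}(G)=4$. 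Therefore $G\cong \mathbb{Z}_4$ or $G\cong \mathbb{D}_8$, which is part (i).

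I expect no serious obstacle: the entire argument is a specialization of Corollary \ref{pgroupclasscorollary} to $k=2$, and the only care needed is to read off the values of $C_4$ in each family and to respect the lower bounds on $k$ (in particular that the semi-dihedral groups start at $k=3$). The subtle point worth emphasizing is that our hypothesis is the specific equation $C_{p^2}=1$ rather than merely ``$C_{p^i}=1$ for some $i$''; it is precisely this that rules out $\mathbb{Q}_8$, whose only order-two-count equal to $1$ is $C_2$, not $C_4$.
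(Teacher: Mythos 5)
Your proof is correct and follows exactly the route the paper intends: the paper states Lemma \ref{lemma 4} as an immediate consequence of Theorem \ref{pgroupclass} and Corollary \ref{pgroupclasscorollary} without writing out details, and your argument is precisely that specialization to $k=2$, including the key observation that the hypothesis $C_{p^2}=1$ (rather than merely $C_{p^i}=1$ for some $i$) is what excludes $\mathbb{Q}_8$ and that the semi-dihedral family starts only at exponent $8$.
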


\begin{theorem}{\rm \cite{b.dummit1991abstract}}{\label{nilpotent}}
 Let $G$ be a finite group. Then the following statements are equivalent:
 \begin{enumerate}
     \item[(i)] $G$ is a nilpotent group.
     \item[(ii)] Every Sylow subgroup of $G$ is normal.
    \item[(iii)] $G$ is the direct product of its Sylow subgroups.
    \item[(iv)] For $x,y\in G, \  x$ and $y$ commute whenever $o(x)$ and $o(y)$ are relatively primes.
 \end{enumerate}
 \end{theorem}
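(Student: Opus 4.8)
The plan is to prove that the four conditions are equivalent by establishing the cyclic chain $(i)\Rightarrow(ii)\Rightarrow(iii)\Rightarrow(iv)\Rightarrow(i)$, taking as the working definition of nilpotency that the upper central series $\{e\}=Z_0\leq Z_1\leq\cdots\leq Z_n=G$ terminates at $G$. The heart of the argument is $(i)\Rightarrow(ii)$, for which I would first isolate the \emph{normalizer condition}: every proper subgroup $H<G$ of a nilpotent group satisfies $H<N_G(H)$. To see this, let $i$ be the largest index with $Z_i\leq H$; since $Z_0=\{e\}\leq H$ such $i$ exists, and $Z_{i+1}\not\leq H$ by maximality (using $Z_n=G\neq H$). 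Because $[Z_{i+1},G]\leq Z_i\leq H$, for any $z\in Z_{i+1}$ and $h\in H$ the commutator $[h,z]$ lies in $Z_i\leq H$, so $z^{-1}hz=h[h,z]\in H$; thus $Z_{i+1}\leq N_G(H)$, and as $Z_{i+1}\not\leq H$ this forces $H<N_G(H)$.

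I would then apply this to a Sylow $p$-subgroup $P$, setting $N=N_G(P)$. Since $P\trianglelefteq N$ is the unique Sylow $p$-subgroup of $N$, any element normalizing $N$ must also normalize $P$ (a Frattini-type argument), whence $N_G(N)=N$. If $N\neq G$, the normalizer condition would give $N<N_G(N)=N$, a contradiction; therefore $N=G$ and $P\trianglelefteq G$, which is $(ii)$.

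The remaining implications are essentially bookkeeping with coprime orders. For $(ii)\Rightarrow(iii)$, list the Sylow subgroups $P_1,\dots,P_k$ for the distinct primes dividing $|G|$; coprimality of orders gives $P_i\cap\prod_{j\neq i}P_j=\{e\}$, and normality together with this trivial intersection forces $[P_i,P_j]\leq P_i\cap P_j=\{e\}$ for $i\neq j$, so distinct factors commute elementwise and the product $P_1\cdots P_k$ has order $\prod_i|P_i|=|G|$, yielding $G=P_1\times\cdots\times P_k$. For $(iii)\Rightarrow(iv)$, write $x,y$ in their components across the factors: if $\gcd(o(x),o(y))=1$ then the nontrivial components of $x$ and of $y$ sit in disjoint sets of factors, and elements lying in distinct direct factors commute, so $x$ and $y$ commute. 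For $(iv)\Rightarrow(i)$, the commuting hypothesis shows Sylow subgroups for distinct primes commute elementwise (their elements have coprime orders), so their product is a subgroup of order $|G|$ and $G=P_1\times\cdots\times P_k$ recovers $(iii)$; since each $P_i$ is a $p$-group and hence nilpotent, and a finite direct product of nilpotent groups is nilpotent (the central series of the factors combine componentwise), $G$ is nilpotent.

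The main obstacle is the step $(i)\Rightarrow(ii)$: it is the only implication that genuinely invokes the central-series definition, and it relies on both the normalizer condition and the self-normalizing property $N_G(N_G(P))=N_G(P)$ of Sylow normalizers. Once $(ii)$ is in hand, the rest of the cycle is routine manipulation of coprime orders and direct-product components.
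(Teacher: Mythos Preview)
Your proof is correct and follows the standard textbook argument (essentially the one in Dummit--Foote, to which the paper defers). Note, however, that the paper does \emph{not} supply its own proof of this theorem: it is stated as a background result with a citation to \cite{b.dummit1991abstract} and used without proof. So there is no ``paper's proof'' to compare against; your write-up simply fills in what the authors chose to quote.

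One small remark on your $(iv)\Rightarrow(i)$ step: you implicitly use that, once elements of distinct Sylow subgroups commute pairwise, the set $P_1\cdots P_k$ is a subgroup of order $\prod_i|P_i|$. This is fine, but it is worth making explicit that closure follows from the elementwise commuting (so $(p_1\cdots p_k)(q_1\cdots q_k)=(p_1q_1)\cdots(p_kq_k)$) and that the order count follows by induction from $|P_1\cdots P_j|\cdot|P_{j+1}|=|P_1\cdots P_{j+1}|$ via coprimality. With that spelled out, the cycle $(i)\Rightarrow(ii)\Rightarrow(iii)\Rightarrow(iv)\Rightarrow(i)$ is complete.
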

 \begin{lemma}{\rm \cite[Lemma 2.5]{a.dalal2022lambda}}{\label{nilpotent lcm}}
     Let $G$ be a finite nilpotent group and $x,y \in G$ be such that $o(x)=s$ and  $o(y)=t$. Then there exists an element $z\in G$ such that $o(z)=\mathrm{lcm}(s,t)$.
 \end{lemma}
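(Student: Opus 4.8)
The plan is to exploit the structural characterization of finite nilpotent groups provided by Theorem~\ref{nilpotent}(iii), which reduces the statement to an elementary observation about prime-power orders. Write $G = P_1 \times P_2 \times \cdots \times P_k$ as the direct product of its Sylow $p_i$-subgroups, where $p_1, \ldots, p_k$ are the distinct primes dividing $|G|$. Under this decomposition every element $g \in G$ corresponds to a unique tuple $(g_1, \ldots, g_k)$ with $g_i \in P_i$, and since the $o(g_i)$ are pairwise coprime prime powers we have $o(g) = o(g_1) \cdots o(g_k)$.

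First I would record the prime-power decompositions $s = o(x) = \prod_{i=1}^{k} p_i^{a_i}$ and $t = o(y) = \prod_{i=1}^{k} p_i^{b_i}$, where $x = (x_1, \ldots, x_k)$ and $y = (y_1, \ldots, y_k)$, so that $o(x_i) = p_i^{a_i}$ and $o(y_i) = p_i^{b_i}$. Then $\mathrm{lcm}(s,t) = \prod_{i=1}^{k} p_i^{\max(a_i,\, b_i)}$, since the two factorizations involve the same set of primes.

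The construction of $z$ is then carried out componentwise. For each index $i$, set $z_i = x_i$ if $a_i \geq b_i$ and $z_i = y_i$ otherwise; in either case $o(z_i) = p_i^{\max(a_i,\, b_i)}$, because one of the two components already attains the larger prime power, with no need to build a new element inside $P_i$. Putting $z = (z_1, \ldots, z_k)$ and using the multiplicativity of the order over coprime components yields $o(z) = \prod_{i=1}^{k} p_i^{\max(a_i,\, b_i)} = \mathrm{lcm}(s,t)$, as desired.

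There is no genuinely hard step here: the entire content is packaged in the direct-product decomposition of Theorem~\ref{nilpotent}(iii), after which the worst that can happen inside each Sylow factor is resolved by simply selecting whichever of $x_i$ and $y_i$ has the larger order. The only point that deserves a word of care is the claim that the order of a tuple equals the product of the orders of its entries, which rests precisely on the coprimality of the $p_i$; this is where the nilpotency hypothesis is essential, as the analogous conclusion can fail for non-nilpotent groups.
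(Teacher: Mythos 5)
Your proof is correct. Note that the paper itself gives no proof of this lemma---it is imported verbatim from \cite[Lemma 2.5]{a.dalal2022lambda}---and your componentwise argument via the Sylow decomposition of Theorem~\ref{nilpotent}(iii) is precisely the natural (standard) proof: choosing in each Sylow factor whichever of $x_i$, $y_i$ has the larger order, and using that orders multiply across components of pairwise coprime prime-power order, settles the claim completely.
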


Now we recall the necessary graph theoretic definitions and notions from  \cite{b.godil,b.westgraph}. A \emph{graph} $\Gamma$ consists of a vertex set $V(\Gamma)$ and an edge set $E(\Gamma)$, where an edge is an unordered pair of distinct vertices of $\Gamma$. If $\{u_1,u_2\}$ is an edge then, we say that $u_1$ is \emph{adjacent} to $u_2$ and write it as $u_1 \sim u_2$. Otherwise, we denote by $u_1 \nsim u_2$. An edge $\{u,v\}$ in a graph $\Gamma$ is called a \emph{loop} if $u=v$. A graph with no loops or multiple edges is called a \emph{simple} graph. In this paper, we are considering only simple graphs. Let $\Gamma$ be a graph. If both the sets $V(\Gamma)$ and $E(\Gamma)$ are empty, then $\Gamma$ is called a \emph{null graph}.  A \emph{subgraph} of  $\Gamma$ is a graph $\Gamma'$ such that $V(\Gamma') \subseteq V(\Gamma)$ and $E(\Gamma') \subseteq E(\Gamma)$. A subgraph $\Gamma '$ of $\Gamma$ is an \emph{induced subgraph} if two vertices of $V(\Gamma')$ are adjacent in $\Gamma'$ if and only if they are adjacent in $\Gamma$. A graph is called \emph{complete} if every pair of distinct vertices are adjacent. The complete graph on $n$ vertices is denoted by $K_n$. A graph $\Gamma$ is said to be $k$-partite if the vertex set of $\Gamma$ can be partitioned into $k$ subsets such that no two vertices in the same subset being adjacent. If $k=2$, then $\Gamma$ is called a \emph{bipartite} graph. A \emph{complete k-partite} graph, denoted by $K_{n_1,n_2,\ldots ,n_k}$, is a $k$-partite graph having its parts sizes $n_1,n_2,\ldots ,n_k$ such that every vertex in each part is adjacent to all the vertices of all other parts of $K_{n_1,n_2,\ldots ,n_k}$.  A \emph{walk} $\lambda$ in a graph $\Gamma$ from the vertex $u$ to the vertex $v$ is a sequence of vertices $u=u_1, u_2,\ldots , u_m = v(m >1)$ such that $u_i\sim u_{i+1}$ for every $i\in \{1,2,\ldots ,m-1\}$. If no edge is repeated in $\lambda$, then it is called a \emph{trail} in $\Gamma$. If no vertex is repeated in $\lambda$, then it is called a \emph{path} in $\Gamma$. 
A graph $\Gamma$ is \emph{connected}  if every pair of vertices has a path in $\Gamma$. Otherwise, $\Gamma$ is \emph{disconnected}. 
   A graph $\Gamma$ is \emph{planar} if it can be drawn on a plane without edge crossing. A planar graph is said to be \emph{outerplanar} if it can be drawn in the plane such that all its vertices lie on the outer face.  A compact connected topological space  such that each point has a neighbourhood homeomorphic to an open disc is called a \emph{surface}. A graph is said to be \emph{embeddable} on a topological surface if it can be drawn on the surface without edge crossing. Let $\mathbb{S}_{g}$ be an orientable surface with $g$ handles, where $g$ is a non-negative integer. The genus $\gamma(\Gamma)$ of a graph $\Gamma$,  is the minimum integer $g$ such that the graph can be embedded in $\mathbb{S}_{g}$, i.e. the graph $\Gamma$ can be drawn into the surface $\mathbb{S}_{g}$ with no edge crossing. Note that the graphs having genus $0$ are planar, and the graphs having genus one are toroidal.
   Let $\mathbb{N}_k$ be the non-orientable surface formed by connected sum of $k$ projective planes, that is, $\mathbb{N}_k$ is a non-orientable surface with $k$ cross-cap. The \emph{cross-cap} $\overline{\gamma}(\Gamma)$ of a graph $\Gamma$, is the minimum non-negative integer $k$ such that $\Gamma$ can be embedded in $\mathbb{N}_k$. For instance, a graph $\Gamma$ is planar if $\overline{\gamma}(\Gamma)=0$ and, $\Gamma$ is \emph{projective-planar} if $\overline{\gamma}(\Gamma)=1$.
The following results are useful for later use.
    \begin{theorem}{\cite{b.westgraph}}{\label{bipartitecondition}}
A  graph $\Gamma$ is bipartite if and only if it has no odd cycle.
\end{theorem}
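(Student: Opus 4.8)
The plan is to prove the two implications separately, treating this as the standard distance-layering characterization of bipartiteness. For the forward direction, I would assume $\Gamma$ is bipartite with vertex partition $V(\Gamma) = A \cup B$, so that every edge has one endpoint in $A$ and one in $B$. Any cycle $v_1 \sim v_2 \sim \cdots \sim v_m \sim v_1$ must then alternate between $A$ and $B$ at each step, since consecutive vertices lie in different parts. Returning to the starting vertex forces the number of edges traversed to be even, so every cycle has even length and $\Gamma$ contains no odd cycle. This direction requires only the alternation observation and no case analysis.

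For the reverse direction, I would first reduce to the connected case: a disjoint union of bipartite graphs is bipartite, and any cycle lives inside a single connected component, so it suffices to bipartition each component separately. Assuming $\Gamma$ is connected with no odd cycle, I would fix a root vertex $u$ and set $A = \{v : d(u,v) \text{ is even}\}$ and $B = \{v : d(u,v) \text{ is odd}\}$, where $d$ denotes the usual path-length distance. The claim is that $(A, B)$ is a valid bipartition, i.e.\ no edge has both endpoints in the same part.

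The crux of the argument---and the step I expect to be the main obstacle---is ruling out an edge $\{x, y\}$ for which $d(u,x)$ and $d(u,y)$ have equal parity. Here I would take a shortest path from $u$ to $x$ and a shortest path from $u$ to $y$, then concatenate the first path, the edge $\{x, y\}$, and the reverse of the second path to form a closed walk at $u$ of length $d(u,x) + 1 + d(u,y)$, which is odd since the two distances share parity. A closed walk of odd length always contains an odd cycle, which contradicts the hypothesis; invoking this standard fact lets me avoid checking directly that the concatenation is a simple cycle. Alternatively, one can let $w$ be the last vertex common to the two shortest paths before they diverge and argue that the portion through $w$, $x$, $y$, back to $w$ is a genuine odd cycle of length $(d(u,x) - d(u,w)) + 1 + (d(u,y) - d(u,w))$, the shortest-path property guaranteeing no internal repeats. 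Either way the contradiction forces every edge to join $A$ to $B$, so $(A, B)$ is a bipartition and $\Gamma$ is bipartite, completing the proof.
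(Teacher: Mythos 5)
This theorem is not proved in the paper at all: it is stated as a classical result cited from West \cite{b.westgraph}, so there is no internal proof to compare against. Your argument is correct and is essentially the standard proof found in that cited reference (parity alternation for the forward direction; distance-layering from a root plus the odd-closed-walk lemma for the converse); the only soft spot is the alternative phrasing ``last vertex common to the two shortest paths before they diverge,'' which should instead be the common vertex farthest from $u$, justified by the standard observation that any common vertex of two shortest paths from $u$ lies at the same distance along both.
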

\begin{theorem}{\cite{b.white1985graphs}}{\label{planarcondition}} The genus and cross-cap of the complete graphs $K_n$ and $K_{m, n}$ are given below:
\begin{itemize}
\item[(i)] $\gamma(K_n)= \left\lceil{\frac{(n-3)(n-4)}{12}}\right\rceil $, $n\geq 3$.
\item[(ii)]$\gamma(K_{m,n})=\left\lceil \frac{(m-2)(n-2)}{4}\right\rceil $, $m,n\geq 2$.
\item[(iii)] $\overline{\gamma}(K_n)= \left\lceil{\frac{(n-3)(n-4)}{6}}\right\rceil $, $n\geq 3$, $n\neq 7$; $\overline{\gamma} (K_7)=3$.
\item[(iv)] $\overline{\gamma}(K_{m,n})=\left\lceil \frac{(m-2)(n-2)}{2}\right\rceil $, $m,n\geq 2$.
\end{itemize}
\end{theorem}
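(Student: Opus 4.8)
The plan is to establish each formula by a matching pair of bounds: an elementary \emph{lower} bound coming from Euler's formula together with a girth estimate, and an \emph{upper} bound coming from an explicit genus/cross-cap embedding. The lower bounds are short and self-contained, while the upper bounds carry essentially all of the difficulty and are the part I would quote from the Ringel--Youngs solution of the Heawood map-colouring problem.

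For the lower bounds, recall that a graph embedded in the orientable surface $\mathbb{S}_g$ satisfies $V - E + F = 2 - 2g$, and in $\mathbb{N}_k$ satisfies $V - E + F = 2 - k$. In $K_n$ every face is bounded by at least three edges and each edge lies on at most two faces, so $3F \le 2E$, i.e. $F \le \tfrac{2}{3}E$. Substituting $V = n$ and $E = \binom{n}{2}$ into Euler's relation and eliminating $F$ gives, after simplification, $g \ge \tfrac{(n-3)(n-4)}{12}$ in the orientable case and $k \ge \tfrac{(n-3)(n-4)}{6}$ in the nonorientable case; since $g$ and $k$ are integers one takes ceilings, yielding the lower bounds in (i) and (iii). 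For $K_{m,n}$ the key difference is that the graph is bipartite, hence by Theorem \ref{bipartitecondition} has no odd cycle and so girth at least $4$; thus $4F \le 2E$, i.e. $F \le \tfrac{1}{2}E$. With $V = m+n$ and $E = mn$ the same elimination gives $g \ge \tfrac{(m-2)(n-2)}{4}$ and $k \ge \tfrac{(m-2)(n-2)}{2}$, proving the lower bounds in (ii) and (iv) after taking ceilings.

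The upper bounds are the substantial part. For each formula one must exhibit an embedding meeting the Euler bound, that is, a \emph{triangular} embedding of $K_n$ (every face a triangle) or a \emph{quadrilateral} embedding of $K_{m,n}$. The constructions proceed by reducing $n$ (or the pair $m,n$) to its residue class --- modulo $12$ for $\gamma(K_n)$ and modulo $6$ for $\overline{\gamma}(K_n)$ --- and, within each class, building the required rotation system from a suitable \emph{current graph}. I would organize the argument as the case analysis over these residue classes, invoking the current-graph machinery of Ringel and Youngs to produce the generating rotation and to verify that the resulting faces are all triangles (respectively quadrilaterals); the complete-bipartite cases are settled by the analogous, and somewhat simpler, Ringel quadrilateral constructions. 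Because these embeddings are long and delicate, in this paper I would cite them from \cite{b.white1985graphs} rather than reproduce them.

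The main obstacle, and the one genuinely exceptional point, is the value $\overline{\gamma}(K_7)=3$. Here the Euler lower bound only gives $k \ge 2$, so the general argument does not settle the case; indeed $K_7$ does admit a triangular embedding in the torus (so $\gamma(K_7)=1$) but \emph{not} in the Klein bottle $\mathbb{N}_2$. The plan is to rule out an embedding in $\mathbb{N}_2$ by a separate combinatorial argument: equality in the face count would force a triangulation of $\mathbb{N}_2$ by $K_7$, and a short case analysis (equivalently, the known non-existence of such a triangulation) produces a contradiction, giving $\overline{\gamma}(K_7)=3$ as claimed.
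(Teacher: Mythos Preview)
The paper does not prove this theorem at all: it is stated with a citation to \cite{b.white1985graphs} and used as a black box throughout, so there is no ``paper's own proof'' to compare against. Your sketch is a correct outline of the classical argument --- Euler-formula lower bounds plus the Ringel--Youngs current-graph constructions for the matching upper bounds, with the Franklin exception at $K_7$ --- and you yourself note that the constructive half would be cited from the same source the paper cites; in that sense your approach and the paper's agree, since both ultimately defer to \cite{b.white1985graphs}.
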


\begin{remark}{\label{order not divide}}
Let $x$ and $y$ be two elements of a finite group $G$ such that neither $o(x)\vert o(y)$ nor $o(y)\vert o(x)$. Then $x\nsim y$ in $\mathcal{P}(G)$. The converse is also true if $x$ and $y$ belong to the same cyclic subgroup of $G$.
\end{remark}    
\begin{proposition}{\rm \cite[Proposition 2.1]{a.biswas2022difference}}{\label{proposition 2.1}}
    Let $G$ be a non-trivial finite group and  $g\in G$ be a non-identity element. Then $g\notin V(\d)$ if and only if either $\langle g\rangle$ is a maximal cyclic subgroup of $G$, or every cyclic subgroup of $G$ containing $g$ has prime-power order.
\end{proposition}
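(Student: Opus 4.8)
The plan is to unpack the definition of $V(\d)$ and translate both adjacency relations into statements about cyclic subgroups. Since $\d$ is obtained from $\mathcal{P}_E(G)-\mathcal{P}(G)$ by deleting isolated vertices, a non-identity element $g$ satisfies $g\notin V(\d)$ if and only if $g$ is isolated in $\mathcal{P}_E(G)-\mathcal{P}(G)$, that is, every neighbour of $g$ in $\mathcal{P}_E(G)$ is already a neighbour of $g$ in $\mathcal{P}(G)$. Recalling that $g\sim h$ in $\mathcal{P}_E(G)$ exactly when $\langle g,h\rangle$ is cyclic, and $g\sim h$ in $\mathcal{P}(G)$ exactly when one of $g,h$ is a power of the other, the statement reduces to the following reformulation, which I would prove in both directions: $g$ is isolated if and only if for every $h$ with $\langle g,h\rangle$ cyclic we have $\langle g\rangle\subseteq\langle h\rangle$ or $\langle h\rangle\subseteq\langle g\rangle$.

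For the ``if'' direction I would treat the two listed hypotheses in turn. If $\langle g\rangle$ is a maximal cyclic subgroup and $\langle g,h\rangle=\langle z\rangle$ is cyclic, then $\langle g\rangle\subseteq\langle z\rangle$ forces $\langle g\rangle=\langle z\rangle$ by maximality, so $h\in\langle g\rangle$ and $g\sim h$ in $\mathcal{P}(G)$. If instead every cyclic subgroup containing $g$ has prime-power order, then any $\langle z\rangle=\langle g,h\rangle$ has order $p^{k}$; since the subgroups of a cyclic $p$-group form a chain, $\langle g\rangle$ and $\langle h\rangle$ are comparable and again $g\sim h$ in $\mathcal{P}(G)$. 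In either case $g$ has no neighbour in $\d$.

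For the ``only if'' direction I would argue by contraposition: assuming $\langle g\rangle$ is \emph{not} maximal cyclic and that some cyclic subgroup $\langle w\rangle\ni g$ has order $n$ that is not a prime power, I would produce a witness $h$ showing $g\in V(\d)$. Choosing a maximal cyclic subgroup $M=\langle z\rangle$ containing $\langle w\rangle$, its order $m$ is divisible by $n$ and hence is not a prime power; moreover, since $\langle g\rangle$ is not maximal cyclic, $\langle g\rangle\subsetneq M$, so $d:=o(g)$ is a proper divisor of $m$ with $2\le d<m$. The crux is then a purely number-theoretic claim: when $m$ is not a prime power and $1<d<m$ divides $m$, there is a divisor $e$ of $m$ with $d\nmid e$ and $e\nmid d$. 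Taking $h\in\langle z\rangle$ with $o(h)=e$, the subgroup $\langle g,h\rangle\subseteq\langle z\rangle$ is cyclic while the incomparability of the orders $d,e$ means neither of $g,h$ is a power of the other, so $g\sim h$ in $\d$.

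The main obstacle is establishing this incomparable-divisor claim cleanly over all cases, and I would split on whether $d$ is itself a prime power. If $d$ is a power of a single prime $p$, then any prime $q\ne p$ dividing $m$ (which exists because $m$ is not a prime power) gives an incomparable divisor $e=q$, since $p\nmid q$ and $q\nmid p^{c}$. If $d$ has at least two distinct prime factors, I would pick a prime $p$ with $v_{p}(d)<v_{p}(m)$ (available since $d<m$ and $d\mid m$) and a second prime $q\mid d$ with $q\ne p$, and set $e=p^{\,v_{p}(d)+1}\prod_{\ell\ne p,q}\ell^{\,v_{\ell}(d)}$; this divides $m$ yet is incomparable to $d$, because its $p$-part is too large to divide $d$ while its $q$-part equals $0$ and so is too small for $d$ to divide $e$ (here $v_{\ell}$ denotes the $\ell$-adic valuation). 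Finally, I would record the standard fact used throughout that in a cyclic group two elements are comparable under the power relation precisely when their orders are comparable under divisibility, which is exactly what lets me pass between orders of elements and adjacency in $\mathcal{P}(G)$.
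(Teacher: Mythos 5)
Your proof is correct. Note, however, that the paper under review does not prove this proposition at all: it is imported verbatim, with a citation, from Biswas, Cameron, Das and Dey (\cite[Proposition 2.1]{a.biswas2022difference}), so there is no in-paper argument to compare against. Judged on its own terms, your argument is sound and complete: the reformulation of ``$g\notin V(\d)$'' as ``every $\mathcal{P}_E$-neighbour of $g$ is a $\mathcal{P}$-neighbour'' is exactly what the definition of $\d$ (difference graph with isolated vertices removed) gives; the forward (``if'') direction correctly uses maximality in one case and the chain structure of subgroups of a cyclic $p$-group in the other; and the contrapositive of the converse is carried by your incomparable-divisor lemma, whose two cases (the order $d=o(g)$ a prime power, respectively $d$ having at least two prime factors) are exhaustive and correctly handled via the valuations $v_p$. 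The only ingredients you use beyond elementary number theory are that a finite cyclic subgroup lies in a maximal cyclic one, that a cyclic group of order $m$ contains an element of each order dividing $m$, and that $x$ a power of $y$ forces $o(x)\mid o(y)$ --- all standard, and the last one is exactly the paper's Remark~\ref{order not divide}.
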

\begin{proposition}{\rm \cite[Proposition 4.1]{a.biswas2022difference}}{\label{nilpotent coprime adj}}
Let $G$ be a finite nilpotent group and $x,y$ be two non-identity elements of $G$ such that $\mathrm{gcd}(o(x),o(y))=1$, then $x\sim y$ in $\d$.
\end{proposition}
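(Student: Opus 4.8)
The plan is to verify directly that $x$ and $y$ satisfy the two defining conditions for adjacency in $\d$: they are adjacent in the enhanced power graph $\a$ but non-adjacent in the power graph $\mathcal{P}(G)$ (this simultaneously shows neither is isolated, so both lie in $V(\d)$). Write $o(x)=s$ and $o(y)=t$. Since $x,y$ are non-identity and $\mathrm{gcd}(s,t)=1$, we have $s,t>1$ and neither of $s,t$ divides the other.

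First I would establish adjacency in $\a$. Because $G$ is nilpotent and $\mathrm{gcd}(o(x),o(y))=1$, Theorem \ref{nilpotent}(iv) gives that $x$ and $y$ commute. Hence $\langle x,y\rangle$ is abelian, and since $\langle x\rangle\cap\langle y\rangle=\{e\}$ (the order of the intersection divides both $s$ and $t$, hence divides $\mathrm{gcd}(s,t)=1$), we get $\langle x,y\rangle=\langle x\rangle\times\langle y\rangle\cong\mathbb{Z}_s\times\mathbb{Z}_t$. As $\mathrm{gcd}(s,t)=1$, this group is cyclic of order $st$; concretely, $xy$ has order $\mathrm{lcm}(s,t)=st$ and therefore generates $\langle x,y\rangle$. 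Thus $x,y\in\langle xy\rangle$, so by definition $x\sim y$ in $\a$. Alternatively, one may invoke Lemma \ref{nilpotent lcm} to produce an element $z$ of order $st$ and argue that $x,y$ lie in the cyclic group $\langle z\rangle$.

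Next I would rule out adjacency in $\mathcal{P}(G)$. Since $s,t>1$ are coprime, neither $s\mid t$ nor $t\mid s$, so Remark \ref{order not divide} yields $x\nsim y$ in $\mathcal{P}(G)$. Combining the two steps, $x$ and $y$ are adjacent in the enhanced power graph and non-adjacent in the power graph, which is precisely the condition for $x\sim y$ in $\d$; in particular both $x$ and $y$ are non-isolated and belong to $V(\d)$.

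There is no substantial obstacle here. The only point needing care is the structural claim that $\langle x,y\rangle$ is cyclic, which relies on two ingredients: commutativity of $x$ and $y$ (supplied by nilpotency via Theorem \ref{nilpotent}(iv)) and the coprimality of their orders (giving trivial intersection and, through the Chinese Remainder Theorem, cyclicity of the direct product). Once this is in place, the remainder is a direct appeal to Remark \ref{order not divide} and the definition of the difference graph.
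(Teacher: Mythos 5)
Your proof is correct. The paper itself offers no proof of this proposition---it is quoted directly from \cite[Proposition 4.1]{a.biswas2022difference}---but your argument is exactly the standard one: Theorem \ref{nilpotent}(iv) makes $x$ and $y$ commute, coprimality of orders forces $\langle x\rangle\cap\langle y\rangle=\{e\}$ and hence $\langle x,y\rangle=\langle xy\rangle$ cyclic of order $o(x)o(y)$, giving $x\sim y$ in $\a$, while Remark \ref{order not divide} gives $x\nsim y$ in $\mathcal{P}(G)$; this is also precisely the reasoning the paper redeploys repeatedly inside the proof of Theorem \ref{Main} (e.g.\ in Subcases 2.1 and 3.1), so your write-up matches the intended argument.
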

\begin{proposition}{\rm \cite[Corollary 3.6]{a.kumar2022difference}}{\label{x nonadj y in P}}
Let $G$ be a finite nilpotent group and $x$, $y$ be two non-identity elements of a Sylow subgroup of $G$. Then $x\nsim y$ in $\d$.

\end{proposition}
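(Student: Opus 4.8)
The plan is to show that when $x$ and $y$ lie in a common Sylow subgroup, every enhanced-power-graph edge between them is forced to also be a power-graph edge, and therefore no edge between them survives in the difference $\d$. First I would reduce to the only nontrivial case: since $\d$ is a subgraph of $\a$, it suffices to treat the situation $x\sim y$ in $\a$, because if $x\nsim y$ in $\a$ then certainly $x\nsim y$ in $\d$ and there is nothing to prove.

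So suppose $x\sim y$ in $\a$; by the definition of the enhanced power graph this means $\langle x,y\rangle$ is cyclic. Let $P$ be the Sylow $p$-subgroup of $G$ containing both $x$ and $y$. Since $P$ is a subgroup of $G$, we have $\langle x,y\rangle\subseteq P$, and hence $\langle x,y\rangle$ is a \emph{cyclic group of $p$-power order}. This confinement of $\langle x,y\rangle$ inside the $p$-group $P$ is the one piece of bookkeeping that needs care, but it is immediate from $x,y\in P$.

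The crux is then the elementary fact that the subgroups of a cyclic $p$-group are linearly ordered by inclusion: a cyclic group of order $p^{n}$ has a unique subgroup of each order $p^{i}$ for $0\le i\le n$, and these subgroups form a chain. Applying this to $\langle x\rangle$ and $\langle y\rangle$, which are both subgroups of the cyclic $p$-group $\langle x,y\rangle$, I obtain either $\langle x\rangle\subseteq\langle y\rangle$ or $\langle y\rangle\subseteq\langle x\rangle$; in either case one of $x,y$ is a power of the other, so $x\sim y$ in $\mathcal{P}(G)$. Finally, by the very definition of $\d$ as the difference $\mathcal{P}_{E}(G)-\mathcal{P}(G)$, any edge lying in $\mathcal{P}(G)$ is deleted upon passing to $\d$, whence $x\nsim y$ in $\d$. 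There is no serious obstacle in this argument: the entire statement rests on the chain structure of the subgroup lattice of a cyclic $p$-group, together with the observation that $\langle x,y\rangle$ genuinely lands inside $P$.
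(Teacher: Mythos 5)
Your proof is correct. Note that the paper itself does not prove this proposition; it imports it as \cite[Corollary 3.6]{a.kumar2022difference}, so there is no internal argument to compare against, but your reasoning is exactly the standard one underlying the paper's earlier remark that the power graph and enhanced power graph coincide for $p$-groups: if $x\sim y$ in $\mathcal{P}_E(G)$ then $\langle x,y\rangle$ is cyclic, it lies inside the Sylow subgroup $P$ because $P$ is a subgroup, and the subgroups of a cyclic $p$-group form a chain, so $\langle x\rangle$ and $\langle y\rangle$ are comparable and the edge already occurs in $\mathcal{P}(G)$, hence is deleted in $\mathcal{D}(G)$. All the steps check out, including the reduction to the case $x\sim y$ in $\mathcal{P}_E(G)$ and the containment $\langle x,y\rangle\subseteq P$. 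One small observation your argument makes plain: nilpotency of $G$ plays no role here. What you actually prove is that in \emph{any} finite group, two non-identity elements lying in a common $p$-subgroup are never adjacent in $\mathcal{D}(G)$; the nilpotent hypothesis in the statement is just the context in which the paper applies it, since there the Sylow subgroups are the canonical direct factors.
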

\begin{lemma}{\rm \cite[Lemma 4.4]{a.biswas2022difference}}{\label{induced lemma}}
Let $G$ be a group and let $H$ be a non-EPPO-subgroup of $G$. Then $\mathcal{D}(H)$ is an induced subgraph of $\d$.
\end{lemma}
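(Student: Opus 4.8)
The plan is to exploit the fact that both adjacency relations defining the difference graph are \emph{intrinsic}: for elements lying in $H$, adjacency can be decided inside $H$ without reference to the ambient group $G$. Throughout, write $x \sim y$ for adjacency in the difference $\mathcal{P}_E(\cdot) - \mathcal{P}(\cdot)$ taken \emph{before} deleting isolated vertices. The argument then splits into an adjacency-preservation step and a vertex-set bookkeeping step, the latter being the only delicate point because $\mathcal{D}$ is obtained by removing isolated vertices.

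First I would check that for any $x,y \in H$ adjacency is the same whether computed in $H$ or in $G$. For the enhanced power graph, $x$ and $y$ are adjacent exactly when $\langle x,y\rangle$ is cyclic; since the subgroup generated by $x$ and $y$ is the same set whether $x,y$ are regarded as elements of $H$ or of $G$, this condition is independent of the ambient group. For the power graph, $x \sim y$ means $x = y^{k}$ or $y = x^{k}$ for some integer $k$, which is again a relation between two fixed elements and does not depend on the ambient group. Combining these, for $x,y \in H$ we get $x \sim y$ in $\mathcal{P}_E(H) - \mathcal{P}(H)$ if and only if $x \sim y$ in $\mathcal{P}_E(G) - \mathcal{P}(G)$.

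Next I would handle the deleted isolated vertices. Let $V(\mathcal{D}(H))$ denote the set of non-isolated vertices of $\mathcal{P}_E(H) - \mathcal{P}(H)$. If $v \in V(\mathcal{D}(H))$, then $v$ has a neighbour $w \in H$ in $\mathcal{P}_E(H) - \mathcal{P}(H)$; by the previous step $v \sim w$ in $\mathcal{P}_E(G) - \mathcal{P}(G)$ as well, so $v$ is non-isolated there, whence $v \in V(\d)$. This yields $V(\mathcal{D}(H)) \subseteq V(\d)$. Finally, for $x,y \in V(\mathcal{D}(H))$ the pair $\{x,y\}$ is an edge of $\d$ iff $x \sim y$ in $\mathcal{P}_E(G) - \mathcal{P}(G)$, which by the adjacency step holds iff $x \sim y$ in $\mathcal{P}_E(H) - \mathcal{P}(H)$, i.e. iff $\{x,y\}$ is an edge of $\mathcal{D}(H)$. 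Hence the subgraph of $\d$ induced on $V(\mathcal{D}(H))$ is precisely $\mathcal{D}(H)$, which is the assertion.

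I expect the genuinely delicate part to be the bookkeeping for removed isolated vertices: one must verify both that vertices surviving in $\mathcal{D}(H)$ still survive in $\d$ (handled above) and that no extra edges appear in the induced subgraph, the latter being automatic since induced edges of $\d$ between members of $H$ coincide with the intrinsic relation. The role of the non-EPPO hypothesis is to ensure the statement is non-vacuous, i.e. that $\mathcal{D}(H)$ is not the null graph: since $H$ contains an element $z$ whose order is divisible by two distinct primes $p$ and $q$, elements $x,y \in \langle z\rangle$ of orders $p$ and $q$ are adjacent in $\mathcal{P}_E(H)$ but not in $\mathcal{P}(H)$, so $\mathcal{D}(H)$ carries at least one edge. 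This is exactly the form needed later to transfer genus and cross-cap lower bounds from $\mathcal{D}(H)$ to $\d$.
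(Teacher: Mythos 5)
Your proof is correct: adjacency in both the enhanced power graph and the power graph is intrinsic to the pair $x,y\in H$ (via the characterization that $x\sim y$ in $\mathcal{P}_E(\cdot)$ iff $\langle x,y\rangle$ is cyclic, and the power-graph relation being a statement about the two elements alone), so the difference relation on $H$ agrees with the restriction of the difference relation on $G$, and your bookkeeping for the deleted isolated vertices (a non-isolated vertex of the difference on $H$ stays non-isolated in the difference on $G$, and no extra edges can appear between vertices of $V(\mathcal{D}(H))$) completes the induced-subgraph claim. Note that this paper contains no proof of the lemma to compare against, since it is imported verbatim from \cite[Lemma 4.4]{a.biswas2022difference}; your argument is the natural one, and your closing observation is also accurate, namely that the non-EPPO hypothesis is not needed for the induced-subgraph property itself (which holds vacuously when $\mathcal{D}(H)$ is null) but guarantees that $\mathcal{D}(H)$ carries at least one edge, which is what makes the lemma usable for transferring genus and cross-cap bounds.
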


    

\begin{theorem}{\rm \cite[Theorem 4.7]{a.kumar2022difference}}{\label{PlanarD}}
Let $G$ be a finite nilpotent group which is not a $p$-group. Then $\d$ is planar if and only if $G$ is isomorphic to one of the following groups: 
\[\mathbb{Z}_{12}, \;\mathbb{D}_8\times \mathbb{Z}_3, \; \mathbb{Z}_2 \times \mathbb{Z}_2 \times \cdots \times \mathbb{Z}_2 \times \mathbb{Z}_3, \; \mathbb{Z}_2 \times Q_1, \; \mathbb{Z}_3 \times Q_2, \; \text{where}\]
 $Q_i$ is a $q_i$-group of prime exponent $q_i$ such that $q_1> 2$ and $q_2 > 3$.
\end{theorem}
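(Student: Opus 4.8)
The plan is to read the structure of $\d$ off the two adjacency rules available for nilpotent groups and then to force planarity through the genus formulas of Theorem \ref{planarcondition}. Write $G=P_1\times\cdots\times P_r$ as the direct product of its Sylow subgroups (Theorem \ref{nilpotent}); since $G$ is not a $p$-group, $r\ge 2$. I would use repeatedly that two non-identity elements of coprime orders are adjacent in $\d$ (Proposition \ref{nilpotent coprime adj}), whereas two non-identity elements of a common Sylow subgroup are never adjacent (Proposition \ref{x nonadj y in P}); consequently, for distinct primes the classes of elements of order $p_i$ and of order $p_j$ are independent sets joined completely to each other, so the prime-order elements span a complete multipartite subgraph with parts of size $C_{p_i}(p_i-1)$. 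If $r\ge 3$, the three smallest primes give parts of sizes at least $1,2,4$, and merging the first two exhibits $K_{3,3}$ as a subgraph; by Theorem \ref{planarcondition} this is non-planar, so a planar $\d$ forces $r=2$. Henceforth $G=P\times Q$ with $|P|=p^{a}$, $|Q|=q^{b}$ and $p<q$.

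Next I would isolate the \emph{prime-order bipartite core}: the order-$p$ and order-$q$ elements induce $K_{C_p(p-1),\,C_q(q-1)}$, which is planar only when $\min\{C_p(p-1),C_q(q-1)\}\le 2$ (Theorem \ref{planarcondition}). Since a finite $2$-group has an odd number of involutions, $C_2$ is odd; combining this with $C_q(q-1)\ge q-1\ge 4$ for $q\ge5$ eliminates the possibility $p\ge 5$ at once and reduces the analysis to $(p,q)=(2,3)$, to $(2,q)$ with $q\ge5$ (then $C_2=1$), and to $(3,q)$ with $q\ge5$ (then $C_3=1$).

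The heart of the argument is to convert these numerical constraints into structural ones using the $p$-group results. A Sylow factor with a unique subgroup of prime order must be cyclic, or generalized quaternion when $p=2$ (Corollary \ref{pgroupclasscorollary}); a factor of exponent $p^2$ possessing a single cyclic subgroup of order $p^2$ is $\mathbb{Z}_4$ or $\mathbb{D}_8$ for $p=2$ and $\mathbb{Z}_{p^2}$ otherwise (Lemma \ref{lemma 4}), while two cyclic subgroups of order $p^2$ meeting in order $p$ (Lemma \ref{lemma 3}) already produce too many elements of order $p^2$. The remaining step is to show that a single element of order $p^{2}$ or $q^{2}$ is fatal: its generator is a vertex by Proposition \ref{proposition 2.1}, because the cyclic subgroup it generates is not maximal, and such elements together with the order-$pq$ elements span a second complete bipartite layer large enough to contain $K_{3,3}$ (for instance, this is what makes $\mathbb{Z}_4\times\mathbb{Z}_5=\mathbb{Z}_{20}$ non-planar). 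This forces every Sylow factor to have exponent equal to its prime, apart from the lone admissible exponent-$4$ two-group paired with $\mathbb{Z}_3$, pinning the candidates down to the stated families.

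Finally I would verify sufficiency by computing $V(\d)$ from Proposition \ref{proposition 2.1}. For $\mathbb{Z}_2\times Q_1$, $\mathbb{Z}_3\times Q_2$ and $\mathbb{Z}_2\times\cdots\times\mathbb{Z}_2\times\mathbb{Z}_3$ every element of composite order generates a maximal cyclic subgroup and is therefore not a vertex, so $\d$ is exactly the complete bipartite graph between the two prime-order classes with a part of size at most $2$, hence planar; for $\mathbb{Z}_{12}$ and $\mathbb{D}_8\times\mathbb{Z}_3$ a few vertices of order $4$ and $6$ survive and one draws the resulting small graph explicitly. I expect the main obstacle to be Step $3$ in the case $(p,q)=(2,3)$: with both primes small, the $2$-group $P$ may be arbitrarily large (the whole elementary abelian family survives) and may carry exponent $4$, so the bound of the core is far from decisive, and separating the admissible $2$-groups from the rest requires the full force of Lemmas \ref{lemma 3} and \ref{lemma 4} together with Corollary \ref{pgroupclasscorollary}, as well as careful bookkeeping of the asymmetry that an exponent-$4$ two-part is planar only alongside $\mathbb{Z}_3$.
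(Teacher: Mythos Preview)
This theorem is not proved in the present paper: it is quoted verbatim as Theorem~4.7 of \cite{a.kumar2022difference} and used as a black box in the proof of Theorem~\ref{Main}. Consequently there is no ``paper's own proof'' to compare your proposal against.

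That said, your outline is a reasonable reconstruction of how such a classification is obtained, and it is consistent with the auxiliary results the paper imports (Propositions~\ref{proposition 2.1}, \ref{nilpotent coprime adj}, \ref{x nonadj y in P}, Lemmas~\ref{lemma 3}, \ref{lemma 4}, Corollary~\ref{pgroupclasscorollary}). Two places deserve tightening. First, when you invoke Proposition~\ref{proposition 2.1} to say that an element of order $p^{2}$ is a vertex ``because the cyclic subgroup it generates is not maximal'', you must also check the second clause of that proposition, namely that such an element lies in a cyclic subgroup of non-prime-power order; this is immediate once the other Sylow factor is non-trivial, but it should be said. Second, your heuristic that ``a second complete bipartite layer large enough to contain $K_{3,3}$'' disposes of exponent $p^{2}$ is correct in spirit but needs case-by-case justification: for instance in $\mathbb{Z}_{20}$ the obstruction is that the three elements of orders $2$ and $4$ are each adjacent to all four elements of order $5$, giving $K_{3,4}$, whereas the order-$4$/order-$10$ layer alone is only $K_{2,4}$. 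You correctly flag the $(p,q)=(2,3)$ case as the delicate one; there your argument must also exclude $Q_{8}\times\mathbb{Z}_3$ (where $C_2=1$ but six elements of order $4$ are each adjacent to all four elements of orders $3$ and $6$, yielding $K_{6,4}$), and more generally rule out any exponent-$4$ two-group with more than one cyclic subgroup of order $4$, which is exactly what Lemma~\ref{lemma 4} accomplishes.
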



\section{Proof of the main result}
In this section, we provide the proof of our main result.  Let $G=P_1P_2\cdots P_r$, where $P_i$ $(1 \le i \le r)$ is a Sylow $p_i$-subgroup of $G$ and $|P_i|=p_i^{\alpha _i}$, be a nilpotent group. For $x\in G$, there exists a unique element $x_i\in P_i$ for each $i\in [r]=\{1,2,\ldots ,r\}$ such that $x=x_1x_2\cdots x_r$. Since $(x_1,x_2,\ldots , x_r) \longmapsto x_1x_2\cdots x_r$ is a group isomorphism from $P_1\times P_2\times \cdots \times P_r$ to $P_1P_2\cdots P_r$. Thus, instead of $P_1P_2\cdots P_r$, we sometimes write $P_1\times P_2\times \cdots \times P_r$  without referring to it. In order to prove Theorem \ref{Main}, the following lemmas are useful.  

\begin{lemma}{\label{lemma 1}}
    Let $G=P_1P_2\cdots P_r \ (r\geq 2)$ be a finite nilpotent group. Then the subgraph of $\d$ induced by the set $(P_1\cup P_2\cup \cdots \cup P_r)\setminus \{e\}$ is isomorphic to complete $r$-partite graph $K_{|P_1|-1,|P_2|-1,\ldots , |P_r|-1}$.
\end{lemma}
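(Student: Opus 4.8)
The plan is to verify directly that the proposed $r$-partition of the vertex set satisfies the two defining properties of a complete $r$-partite graph: independence within each part, and complete adjacency across parts. Set $A_i = P_i \setminus \{e\}$ for $i \in [r]$. Since $G$ is nilpotent, distinct Sylow subgroups $P_i$ and $P_j$ have coprime orders and hence intersect trivially, so the sets $A_1, \ldots, A_r$ are pairwise disjoint with union exactly $(P_1 \cup \cdots \cup P_r)\setminus\{e\}$ and $|A_i| = |P_i|-1$. This identifies the candidate parts, of sizes $|P_1|-1, \ldots, |P_r|-1$.

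First I would confirm that every element of $\bigcup_{i} A_i$ is genuinely a vertex of $\d$, i.e. that none of them were deleted as isolated vertices in the definition of $\d$. Because $r \geq 2$ and each Sylow subgroup is nontrivial, given any $x \in A_i$ we may choose $y \in A_j$ for some $j \neq i$; the orders $o(x)$ and $o(y)$ are powers of the distinct primes $p_i$ and $p_j$, so $\mathrm{gcd}(o(x),o(y))=1$, and Proposition \ref{nilpotent coprime adj} yields $x \sim y$ in $\d$. Hence $x$ is non-isolated and lies in $V(\d)$, so the induced subgraph is well defined on all of $\bigcup_i A_i$.

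The same coprimality observation settles every cross-part edge at once: for $x \in A_i$ and $y \in A_j$ with $i \neq j$, coprimality of $o(x)$ and $o(y)$ together with Proposition \ref{nilpotent coprime adj} gives $x \sim y$. For intra-part non-adjacency I would invoke Proposition \ref{x nonadj y in P} directly: any two non-identity elements of the same Sylow subgroup $P_i$ are non-adjacent in $\d$, so each $A_i$ is an independent set. Combining these three facts—each $A_i$ is independent, every pair from distinct parts is adjacent, and all these vertices are present in $V(\d)$—shows that the induced subgraph is exactly $K_{|P_1|-1,\,|P_2|-1,\,\ldots,\,|P_r|-1}$.

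As for the main obstacle, there is essentially none of substance: the statement falls out immediately once Propositions \ref{nilpotent coprime adj} and \ref{x nonadj y in P} are in hand. The only point requiring a moment's care is the bookkeeping around the removal of isolated vertices, and this is precisely where the hypothesis $r \geq 2$ enters—it guarantees that each element of a Sylow subgroup has a coprime-order neighbour and therefore survives as a vertex of $\d$.
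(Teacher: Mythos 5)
Your proposal is correct and follows essentially the same route as the paper: the paper's proof is a one-line appeal to Propositions \ref{nilpotent coprime adj} and \ref{x nonadj y in P}, which are exactly the two facts you invoke for cross-part adjacency and intra-part independence. Your additional bookkeeping (pairwise disjointness of the parts and the check, using $r\geq 2$, that no vertex of $(P_1\cup\cdots\cup P_r)\setminus\{e\}$ is discarded as isolated) just makes explicit what the paper leaves implicit.
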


\begin{proof}
The result holds by Propositions \ref{nilpotent coprime adj} and \ref{x nonadj y in P}. 
\end{proof}

\begin{lemma}{\label{lemma 2}}
Let $G\cong P_1\times P_2$ be a finite  nilpotent group, where $\mathrm{exp}(P_i)=p_i$ for each $i\in \{1,2\}$. Then $\d$ is a complete bipartite graph isomorphic to $K_{|P_1|-1, |P_2|-1}$.
\end{lemma}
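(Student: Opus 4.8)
The plan is to reduce everything to a vertex count. By Lemma \ref{lemma 1} applied with $r=2$, the subgraph of $\d$ induced on $(P_1\cup P_2)\setminus\{e\}$ is exactly $K_{|P_1|-1,\,|P_2|-1}$. Hence it suffices to prove that $V(\d)=(P_1\cup P_2)\setminus\{e\}$, i.e. that no ``mixed'' element survives the deletion of isolated vertices and that every ``pure'' element does. This is precisely the kind of question Proposition \ref{proposition 2.1} is designed to answer, so the whole argument becomes a case analysis feeding into that criterion.

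Writing each $g\in G$ as $g=xy$ with $x\in P_1$ and $y\in P_2$, I would split into the \emph{pure} elements (exactly one of $x,y$ equals $e$) and the \emph{mixed} elements (both $x\neq e$ and $y\neq e$). Because $\mathrm{exp}(P_i)=p_i$, every non-identity element of $P_i$ has order $p_i$; in particular $P_i\neq\{e\}$, the exponent of $G$ is $\mathrm{lcm}(p_1,p_2)=p_1p_2$, and so no element of $G$ has order exceeding $p_1p_2$. For a mixed $g=xy$, the orders $o(x)=p_1$ and $o(y)=p_2$ are coprime and $x,y$ commute, so $\langle g\rangle=\langle x\rangle\times\langle y\rangle$ is cyclic of order $p_1p_2$. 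Since $p_1p_2$ is the largest order occurring in $G$, this $\langle g\rangle$ cannot be properly contained in any cyclic subgroup, hence it is a maximal cyclic subgroup. By Proposition \ref{proposition 2.1}, every mixed $g$ therefore lies outside $V(\d)$.

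For a pure element, say $x\in P_1\setminus\{e\}$, I would fix any $y\in P_2\setminus\{e\}$ and observe that $\langle xy\rangle=\langle x\rangle\times\langle y\rangle$ is a cyclic subgroup of order $p_1p_2$ that properly contains $\langle x\rangle$. Thus $\langle x\rangle$ is \emph{not} maximal cyclic, and $\langle xy\rangle$ is a cyclic subgroup of non-prime-power order containing $x$; both clauses of Proposition \ref{proposition 2.1} fail, so $x\in V(\d)$, and the same argument applies symmetrically to elements of $P_2$. Combining the two cases gives $V(\d)=(P_1\cup P_2)\setminus\{e\}$, and Lemma \ref{lemma 1} then identifies $\d$ with $K_{|P_1|-1,\,|P_2|-1}$, completing the proof. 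The one step I would treat most carefully is the exclusion of the mixed elements: it rests entirely on the hypothesis $\mathrm{exp}(P_i)=p_i$, which caps element orders at $p_1p_2$ and is exactly what forces each $\langle xy\rangle$ to be maximal cyclic—without it these vertices would not be isolated and the clean bipartite description would break down.
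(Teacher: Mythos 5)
Your proof is correct and follows essentially the same route as the paper: Proposition \ref{proposition 2.1} eliminates the mixed elements (those of order $p_1p_2$, whose cyclic subgroups are maximal), and the bipartite adjacency structure comes from Propositions \ref{nilpotent coprime adj} and \ref{x nonadj y in P}, which you simply invoke packaged as Lemma \ref{lemma 1}. If anything, you are slightly more careful than the paper, which only bounds $V(\d)$ from above by the elements of order $p_1$ or $p_2$ and leaves implicit the check that every such element really is a vertex; your explicit application of the ``only if'' direction of Proposition \ref{proposition 2.1} to the pure elements closes that small gap.
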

\begin{proof}
    First note that $\pi _G=\{1,p_1,p_2,p_1p_2\}$ and the identity element of $G$ does not belongs to $V(\d)$. By Proposition \ref{proposition 2.1}, the elements of the order $p_1p_2$ do not belong to $V(\d)$. Thus, $V(\d)$ contains the elements of orders $p_1$ and $p_2$ only. Consider the sets $A=\{x\in V(\d) : o(x)=p_1\}$ and $B=\{y\in V(\d) : o(y)=p_2\}$. Clearly, $A$ and $B$ forms a partition of $V(\d)$. By Propositions \ref{nilpotent coprime adj} and \ref{x nonadj y in P}, $\d$ is a complete bipartite graph which is isomorphic to $K_{|P_1|-1, |P_2|-1}$. This completes our proof.
\end{proof}
\begin{lemma}{\label{lemma 5}}
     Let $G=P_1P_2\cdots P_r$ be a finite nilpotent group. If $r\geq 3$, then $\gamma(\d)\geq 3$ and $\overline{\gamma}(\d)\geq 6$.
\end{lemma}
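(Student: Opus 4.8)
The plan is to exhibit inside $\d$ a complete bipartite subgraph large enough that Theorem \ref{planarcondition} forces $\gamma \ge 3$ and $\overline{\gamma}\ge 6$. The first thing to note is that the complete $r$-partite graph produced by Lemma \ref{lemma 1} lives only on the prime-power-order elements and is too small for this (for the extreme case $\mathbb{Z}_2\times\mathbb{Z}_3\times\mathbb{Z}_5$ it is only $K_{1,2,4}$). So instead I would bring in the \emph{mixed-order} elements, which are the ones that actually raise the genus.

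First I would reduce to a cyclic group of order a product of three primes. Since $r\ge 3$, pick three distinct primes $p_1<p_2<p_3$ dividing $|G|$ and, for each, an element $x_i$ of order $p_i$ in the Sylow subgroup $P_i$. By Theorem \ref{nilpotent}(iv) the $x_i$ pairwise commute, so $H=\langle x_1,x_2,x_3\rangle \cong \mathbb{Z}_{p_1}\times\mathbb{Z}_{p_2}\times\mathbb{Z}_{p_3}\cong \mathbb{Z}_{p_1p_2p_3}$. As $H$ contains the element $x_1x_2$ of order $p_1p_2$, it is a non-EPPO subgroup, so Lemma \ref{induced lemma} shows $\mathcal{D}(H)$ is an induced subgraph of $\d$. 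Hence $\gamma(\d)\ge\gamma(\mathcal{D}(H))$ and $\overline{\gamma}(\d)\ge\overline{\gamma}(\mathcal{D}(H))$, and it suffices to bound $\mathcal{D}(\mathbb{Z}_{p_1p_2p_3})$.

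Next I would describe $\mathcal{D}(\mathbb{Z}_{p_1p_2p_3})$ through orders alone. Because this group is cyclic, every pair of elements lies in a common cyclic subgroup, so its enhanced power graph is complete; consequently two distinct vertices are adjacent in $\mathcal{D}$ exactly when neither of their orders divides the other, by Remark \ref{order not divide}. Using Proposition \ref{proposition 2.1}, the vertex set is seen to consist of all elements whose order is a proper nontrivial divisor of $p_1p_2p_3$, and in particular it contains the $(p_1-1)(p_3-1)$ elements of order $p_1p_3$ and the $(p_2-1)(p_3-1)$ elements of order $p_2p_3$. Since $p_1p_3\nmid p_2p_3$ and $p_2p_3\nmid p_1p_3$, every element of the first kind is adjacent to every element of the second kind, while each kind is an independent set; thus these two classes induce a complete bipartite graph $K_{(p_1-1)(p_3-1),\,(p_2-1)(p_3-1)}$.

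Finally, because $p_1\ge 2$, $p_2\ge 3$, $p_3\ge 5$, the part sizes satisfy $(p_1-1)(p_3-1)\ge 4$ and $(p_2-1)(p_3-1)\ge 8$, so this subgraph contains $K_{4,8}$. By Theorem \ref{planarcondition} we have $\gamma(K_{4,8})=\lceil (2)(6)/4\rceil=3$ and $\overline{\gamma}(K_{4,8})=\lceil (2)(6)/2\rceil=6$, and since genus and cross-cap are monotone under taking subgraphs, $\gamma(\d)\ge 3$ and $\overline{\gamma}(\d)\ge 6$. The main point to get right is the reduction together with the verification that the mixed-order elements are genuine (non-isolated) vertices of $\d$ forming a complete bipartite graph; once adjacency is reduced to the divisibility of orders, the genus and cross-cap estimates follow immediately from Theorem \ref{planarcondition}.
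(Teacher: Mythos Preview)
Your argument is correct. The reduction to the cyclic subgroup $H\cong\mathbb{Z}_{p_1p_2p_3}$ via Lemma~\ref{induced lemma}, together with the observation that in a cyclic group adjacency in $\mathcal{D}$ is governed entirely by divisibility of orders, cleanly produces the bipartite subgraph $K_{(p_1-1)(p_3-1),(p_2-1)(p_3-1)}\supseteq K_{4,8}$, and Theorem~\ref{planarcondition} then gives the bounds.

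The paper proceeds differently in its organisation. It separates $r\geq 4$ (where Lemma~\ref{lemma 1} alone yields $K_{1,2,4,6}\supseteq K_{7,6}$) from $r=3$, and in the latter case it splits again: for $p_3=5$ it runs exactly your construction inside $\mathbb{Z}_{30}$ (elements of orders $10$ and $15$ giving $K_{4,8}$), while for $p_3\geq 7$ it instead combines elements of orders $p_1,p_2,p_1p_2$ on one side with the $p_3-1\geq 6$ elements of order $p_3$ on the other to obtain $K_{5,6}$. Your proof unifies all of this into a single step by recognising that the mixed-order classes of orders $p_1p_3$ and $p_2p_3$ already suffice for every choice of the three smallest primes; this avoids the case split entirely and is arguably cleaner. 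The paper's case analysis, on the other hand, yields sharper genus bounds when $r\geq 4$ (namely $\gamma\geq 5$, $\overline{\gamma}\geq 10$), though these are not needed for the lemma as stated.
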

\begin{proof}
    Let $G=P_1P_2\cdots P_r$ be a nilpotent group such that $|P_i|=p_i^{\alpha _i}$ and $p_j<p_{j+1}$ for $j\in [r-1]$. We prove our result in the following two cases: 
   \vspace{.2cm}
  
\noindent\textbf{Case-1:} $r\geq 4$. By Lemma \ref{lemma 1}, the subgraph of $\d$ induced by the set $(P_1\cup P_2\cup \cdots \cup P_r)\setminus \{e\}$ contains a subgraph isomorphic to $K_{1,2,4,6}$. Note that $K_{1,2,4,6}$ has a subgraph which is isomorphic to $K_{7,6}$. By Theorem \ref{planarcondition}, we get $\gamma(\d)\geq 5$ and $\overline{\gamma}(\d)\geq 10$.  
\vspace{.1cm}

\noindent\textbf{Case-2:} $r=3$. In this case, we have $p_3\geq 5$. Now, we discuss the following subcases: 
     \vspace{.05cm}
     
   \textbf{Subcase-2.1:} $p_3=5$. Clearly, $p_1=2$ and $p_2=3$. By Lemma \ref{nilpotent lcm}, $G$ has an element $x$ such that $\langle x \rangle \cong \mathbb{Z}_{30}$. Note that $\mathbb{Z}_{30}$ has $4$ elements of order $10$ and $8$ elements of order $15$. Suppose $x_i, y_j\in \langle x\rangle$ such that $o(x_i)=10$ and $o(y_j)=15$ for every $i\in [4]$, $j\in [8]$. Note that  $x_i\sim y_j$ in $\a$, but they are not adjacent in $\mathcal{P}(G)$ (see Remark \ref{order not divide}). It follows that for each $i\in [4]$ and  $j\in [8]$, we have $x_i\sim y_j$ in $\d$. Consequently, the subgraph of $\d$ induced by the set $\{x_1,x_2,x_3 , x_4,y_1,\ldots , y_8\}$ is isomorphic to $K_{4,8}$. Thus, $\gamma(\d)\geq 3$ and $\overline{\gamma}(\d)\geq 6$. 
    \vspace{.05cm}
    
   \textbf{Subcase-2.2:} $p_3\geq 7$. In this subcase $G$ has at least $1$ element of order $p_1$, $2$ elements of order $p_2$,  $2$ elements of order $p_1p_2$ and $6$ elements of order $p_3$. By Proposition \ref{nilpotent coprime adj}, the elements of  order $p_1$, $p_2$ and $p_1p_2$ will be adjacent to each element of order $p_3$. Consequently, $\d$ contains a subgraph isomorphic to $K_{5,6}$ and so $\gamma(\d)\geq 3$, $\overline{\gamma}(\d)\geq 6$.
   \end{proof} Now we prove our main result of this manuscript.\\ 
\textbf{Proof of Theorem \ref{Main}.} Let $G=P_1P_2\cdots P_r$ be a nilpotent group such that $|P_i|=p_i^{\alpha _i}$ and $p_j<p_{j+1}$ for $j\in [r-1]$. First, suppose that $\gamma(\d)\leq 2$ and $\overline{\gamma}(\d)\leq 2$.  By Lemma \ref{lemma 5}, we obtain $r=2$. Thus, $G=P_1\times P_2$. We prove our result through the following cases:
 
 \noindent\textbf{Case-1:} $p_1\geq 7$. It follows that $p_2\geq 11$. By Lemma \ref{lemma 1}, the subgraph of $\d$ induced by the set $(P_1  \cup P_2)\setminus \{e\}$ has a subgraph isomorphic to $K_{6,10}$. Consequently, $\gamma(\d)\geq 8$ and $\overline{\gamma}(\d)\geq 16$.
 \vspace{.1cm}
 
    \noindent\textbf{Case-2:} $p_1=5$. Then we must have $p_2\geq 7$.

    \vspace{.05cm}
      \textbf{Subcase-2.1:} \emph{$|P_1|= 5$ and  $|P_2|= p_2$}. Then $G\cong \mathbb{Z}_{5}\times \mathbb{Z}_{p_2}$. By Lemma \ref{lemma 2}, $\d\cong K_{4,p_2-1}$. If $p_2=7$, then $\gamma(\mathcal{D}(\mathbb{Z}_{35}))=2$ and $\overline{\gamma}(\mathcal{D}(\mathbb{Z}_{35}))= 4$. If $p_2\geq 11$, then $\gamma(\d)\geq 4$ and $\overline{\gamma}(\d)\geq 8$.

     \vspace{.05cm}
      \textbf{Subcase-2.2:} \emph{$|P_1|= 5^{\alpha}$ and  $|P_2|= p_2^{\beta}$, where both $\alpha$ and $\beta$ are not equal to $1$}. If $\alpha \geq 2$, then the graph induced by the set $(P_1\cup P_2)\setminus \{e\}$ has a subgraph isomorphic to $K_{24,6}$. Consequently, $\gamma(\d)\geq 22$ and $\overline{\gamma}(\d)\geq 44$. Similarly, if $\alpha =1$ and $\beta \geq 2$, then we get $\gamma(\d)\geq 23$ and $\overline{\gamma}(\d)\geq 46$.
       \vspace{.1cm}
       
  \noindent\textbf{Case-3:} $p_1=3$. Clearly, $p_2\geq 5$.
  \vspace{.05cm}
  
     \textbf{Subcase-3.1:} \emph{$|P_1|=3$}. Note that $\mathrm{exp}(P_2)=p_2^{\alpha}$, where $\alpha \geq 1$. If $\alpha=1$, then by Theorem \ref{PlanarD}, the graph $\d$ is planar. Consequently, $\gamma(\d)=0=\overline{\gamma}(\d)$. Now we assume that $\alpha \geq 2$. Let $x\in P_1$ such that $o(x)=3$ and $y\in P_2$ such that $o(y)=p_2^2$. Notice that $\langle xy \rangle$ is a cyclic subgroup of order $3p_2^2$ in $G$. Consider the sets $S=\{z\in \langle xy \rangle : o(z)=3p_2\}$ and $T=\{z'\in \langle xy \rangle : o(z')=p_2^2\}$. Let $x'\in S$ and $y'\in T$. Clearly, $x'\sim y'$ in $\a$. Also, neither $o(x')\vert o(y')$ nor $o(y')\vert o(x')$. By Remark \ref{order not divide}, $x'\nsim y'$ in $\mathcal{P}(G)$. Thus, $x'\sim y'$ in $\d$. Consequently, $\d$ contains a subgraph isomorphic to $K_{|S|,|T|}$. Since $p_2\geq 5$, we obtain $|S|\geq 8$ and $|T|\geq 20$. Thus, $\gamma(\d)\geq 27$ and $\overline{\gamma}(\d)\geq 54$.

     \vspace{.05cm}
     \textbf{Subcase-3.2:} \emph{$|P_1|= 3^{\alpha}$, where $\alpha \geq 2$}. Notice that $P_1$ has at least $8$ non-identity elements and $P_2$ has at least $4$ non-identity elements. By Lemma \ref{lemma 1}, the graph induced by $(P_1\cup P_2)\setminus \{e\}$ contains a subgraph isomorphic to $K_{8,4}$. Consequently, $\gamma(\d)\geq 3$ and $\overline{\gamma}(\d)\geq 6$.
      \vspace{.1cm}
       
  \noindent\textbf{Case-4:} $p_1= 2$. Now, we have the following possible subcases. 

     \vspace{.05cm}
     \textbf{Subcase-4.1.1:} \emph{$|P_1|=2$ and $\mathrm{exp}(P_2)=p_2$}. By Theorem \ref{PlanarD}, the graph $\d$ is planar. Consequently, $\gamma(\d)=0=\overline{\gamma}(\d)$.

\vspace{.05cm}
     \textbf{Subcase-4.1.2:} \emph{$|P_1|=2$, $|P_2|=3^{\alpha}$, where $\alpha \geq 2$ and $\mathrm{exp}(P_2)=9$}. In view of Lemma  \ref{lemma 3}, we have the following two further subcases:

   \vspace{.05cm}
       \; \; \; \textbf{Subcase-4.1.2(a):} \emph{$P_2$ conatins exactly one cyclic subgroup of order $9$}. By Lemma \ref{lemma 4}, we get $G\cong \mathbb{Z}_{18}$. 
     Observe that the subgraph of $\mathcal{D}(\mathbb{Z}_{18})$ induced by the set $\{3,9,15,2,4,8,10,14,16\}$ is isomorphic to $K_{3,6}$. Thus, $\gamma(\d)\geq 1$ and $\overline{\gamma}(\d)\geq 2$.
     A genus $1$ and cross-cap $2$ drawings of $\mathcal{D}(\mathbb{Z}_{18})$ are given in Figures \ref{Z18} and \ref{Z18c}, respectively.

     \begin{figure}[ht]
    \centering
    \includegraphics[width=0.50\textwidth]{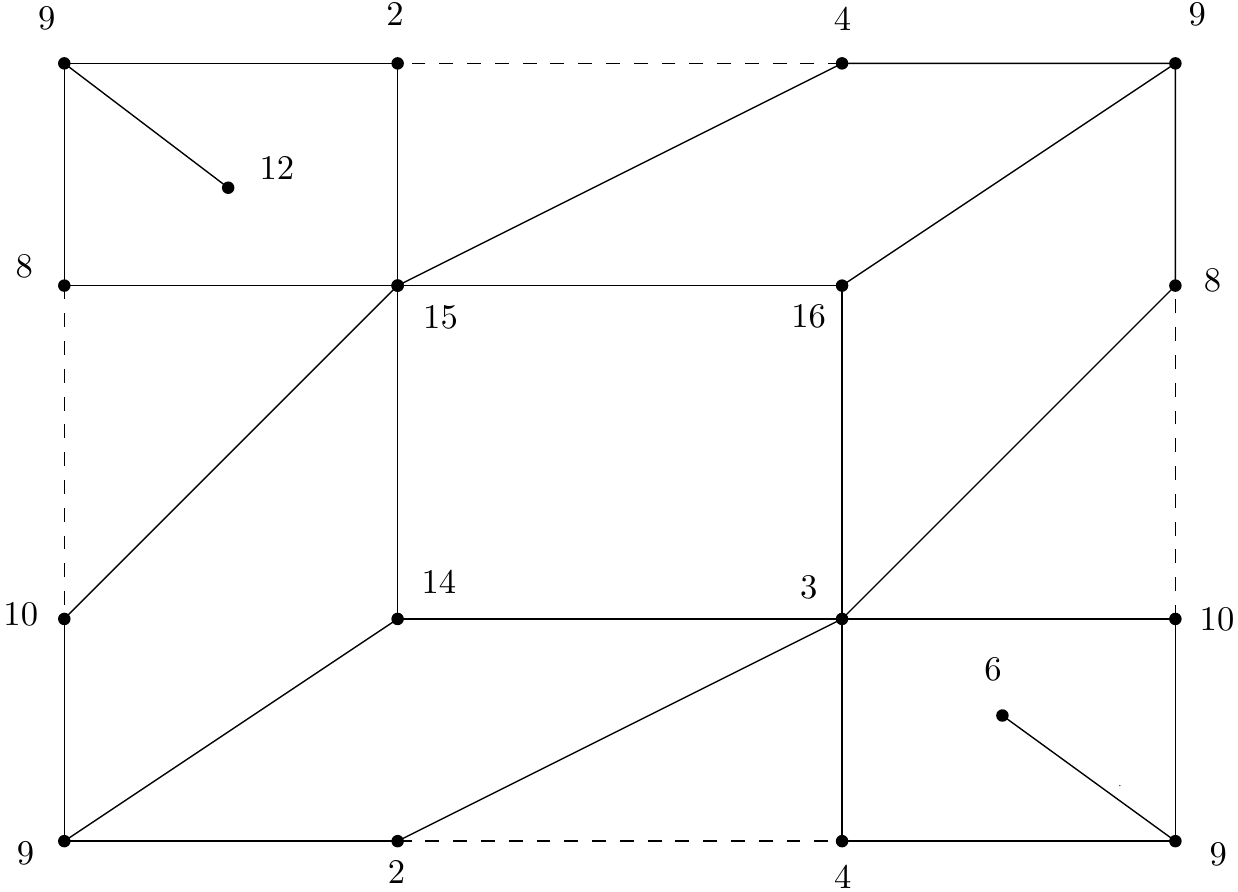}
    \caption{Embedding of ${\mathcal{D}(\mathbb{Z}_{18})}$ in $\mathbb{S}_1$.}
    \label{Z18}
\end{figure}

 \begin{figure}[ht]
    \centering
    \includegraphics[width=0.50\textwidth]{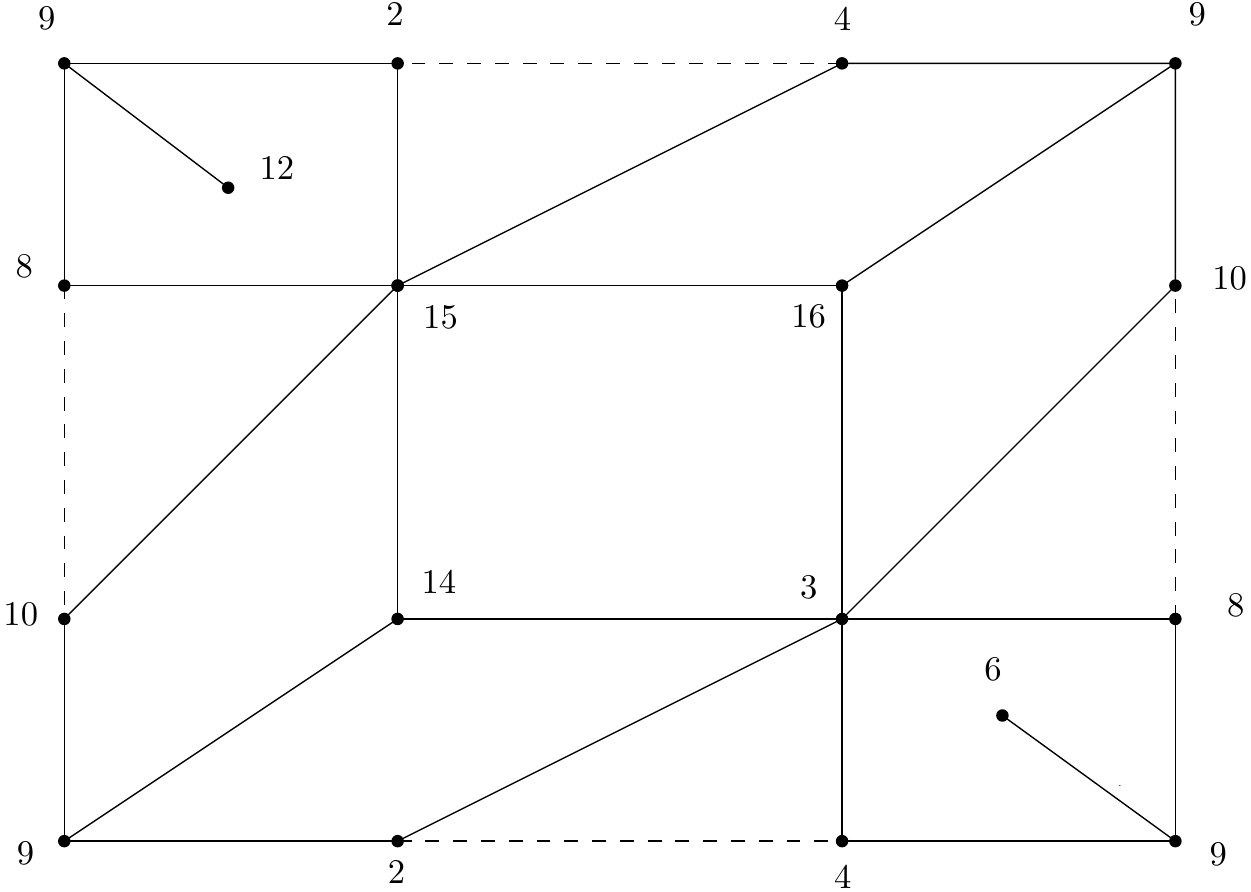}
    \caption{Embedding of ${\mathcal{D}(\mathbb{Z}_{18})}$ in $\mathbb{N}_2$.}
    \label{Z18c}
\end{figure}

\vspace{.05cm}
     \; \; \; \textbf{Subcase-4.1.2(b):}  \emph{$P_2$ contains two cyclic subgroups $H$ and $K$ of order $9$ such that $|H\cap K|=3$}. Let $P_1 =\langle x \rangle$, $H=\langle y \rangle$ and $K=\langle z\rangle$. Suppose $x_1, x_2\in H\cap K$ such that $o(x_1)=o(x_2)=3$. Then
     $$P_1H= \{e, x, x_1, x_2, y, y^2, y^4, y^5, y^7, y^8,  xx_1, xx_2, xy, xy^2, xy^4, xy^5, xy^7, xy^8\}, \ \text{and}$$  $$P_1K= \{e, x, x_1, x_2, z, z^2, z^4, z^5, z^7, z^8, xx_1, xx_2, xz, xz^2, xz^4, xz^5, xz^7, xz^8\}$$ are maximal cyclic subgroups of order $18$ (see {\rm \cite[Lemma 2.11]{a.chattopadhyay2021minimal}}). Consider the sets $S=\{x,xx_1,xx_2\}$ and $T=\{y,y^2,y^4,y^5,y^7,y^8,z,z^2,z^4,z^5,z^7,z^8\}$. Let $x'\in S$ and $y'\in T$. Notice that $x'\sim y'$ in $\a$. Also, neither $o(x')\vert o(y')$ nor  $o(y')\vert o(x')$. It follows that $x'\nsim y'$ in $\mathcal{P}(G)$ and so $x'\sim y'$ in $\d$. Thus, the subgraph induced by  $S\cup T$ has a subgraph isomorphic to $K_{3,12}$. It implies that $\gamma(\d)\geq 3$ and $\overline{\gamma}(\d)\geq 5$.

\vspace{.05cm}
     \textbf{Subcase-4.1.3:}  \emph{$|P_1|=2$ and $\mathrm{exp}(P_2)=3^{\alpha}$, where $\alpha \geq 3$}. Then there exists an element $y\in P_2$ such that $o(y)=27$. Let $P_1=\langle x\rangle $. Observe that $\langle xy\rangle$ is a cyclic subgroup of order $54$ in $G$. Consider the sets $S=\{s\in \langle xy\rangle : o(s)=18\}$ and $T=\{t\in \langle xy\rangle : o(t)=27\}$. Suppose $x'\in S$ and $y'\in T$. Observe that $x'\sim y'$ in $\a$ and $x'\nsim y'$ in $\mathcal{P}(G)$ (cf. Remark \ref{order not divide}). It follows that $x'\sim y'$ in $\d$.  Thus, $\d$ has a subgraph isomorphic to $K_{|S|,|T|}$. Since $|S|=6$ and $|T|=18$, we obtain $\gamma(\d)\geq 16$ and $\overline{\gamma}(\d)\geq 32$. 

     \vspace{.05cm}
      \textbf{Subcase-4.1.4:} \emph{$|P_1|=2$ and $\mathrm{exp}(P_2)=p_2^{\alpha}$, where $p_2\geq 5$, $\alpha \geq 2$}. Then there exists an element $y\in P_2$ such that $o(y)=p_2^2$. Let $P_1=\langle x\rangle$. Observe that $\langle xy\rangle$ is a cyclic subgroup of order $2p_2^2$ in $G$. Consider the sets $S=\{s\in \langle xy\rangle : o(s)=2p_2\}$ and $T=\{t\in \langle xy\rangle : o(t)=p_2^2\}$. Similar to Subcase 4.1.3, we obtain a subgraph of $\d$ which is isomorphic to $K_{|S|,|T|}$. Since $p_2\geq 5$, we have $|S|\geq 4$ and $|T|\geq 20$. It follows that $\gamma(\d)\geq 9$ and $\overline{\gamma}(\d)\geq 18$. 

     \vspace{.05cm}
      \textbf{Subcase-4.2.1:} \emph{$|P_1|=4$ and $|P_2|=3$}. Then either $G\cong \mathbb{Z}_4\times  \mathbb{Z}_3$ or $G\cong \mathbb{Z}_2\times \mathbb{Z}_2 \times \mathbb{Z}_3$. By Theorem \ref{PlanarD}, in both of these cases, $\d$ is a planar graph. Consequently, $\gamma(\d)=0=\overline{\gamma}(\d)$.

\vspace{.05cm}
     \textbf{Subcase-4.2.2:} \emph{$|P_1|=4$ and $|P_2|=5$}. Then either $G\cong \mathbb{Z}_{4}\times \mathbb{Z}_{5} \cong \mathbb{Z}_{20}$ or $G\cong \mathbb{Z}_2\times \mathbb{Z}_2 \times  \mathbb{Z}_5$. If $G\cong \mathbb{Z}_{20}$, then by Theorem \ref{PlanarD}, $\gamma(\d)\geq 1$ and $\overline{\gamma}(\d)\geq 1$. A genus $1$ and cross-cap $1$ drawings of $\mathcal{D}(\mathbb{Z}_{20})$ are given in Figures \ref{Z20}
 and \ref{Z20c}, respectively.  
  \begin{figure}[ht]
    \centering
    \includegraphics[width=0.50\textwidth]{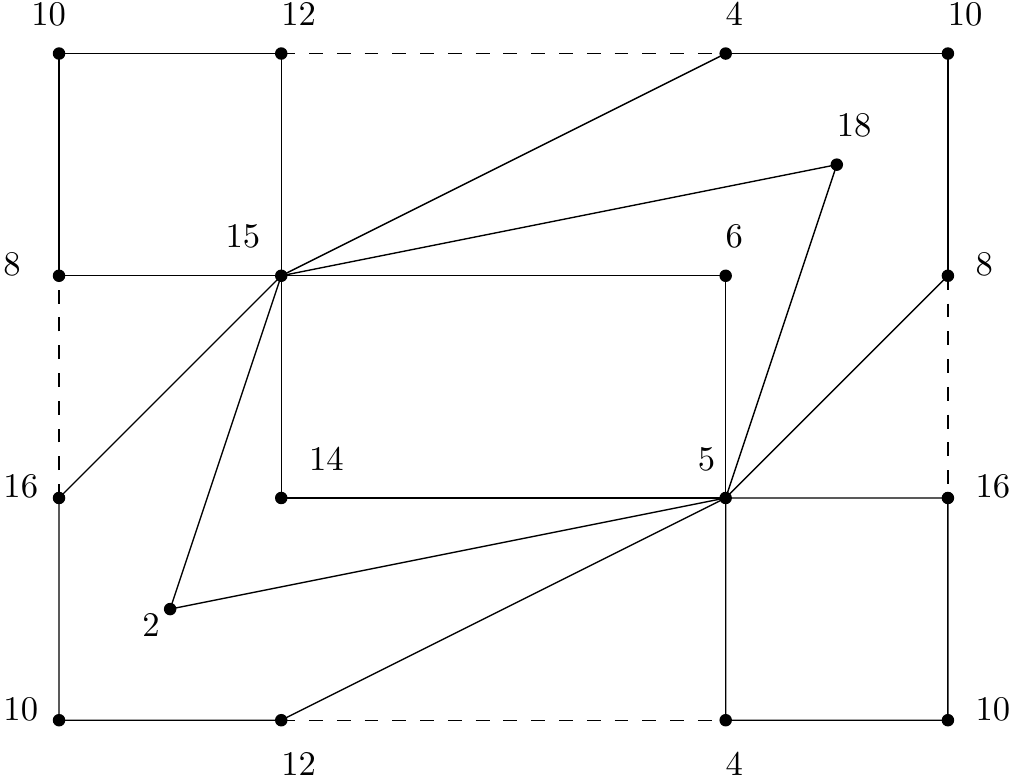}
    \caption{Embedding of ${\mathcal{D}(\mathbb{Z}_{20})}$ in $\mathbb{S}_1$.}
    \label{Z20}
\end{figure}
 \begin{figure}[ht]
    \centering
    \includegraphics[width=0.50\textwidth]{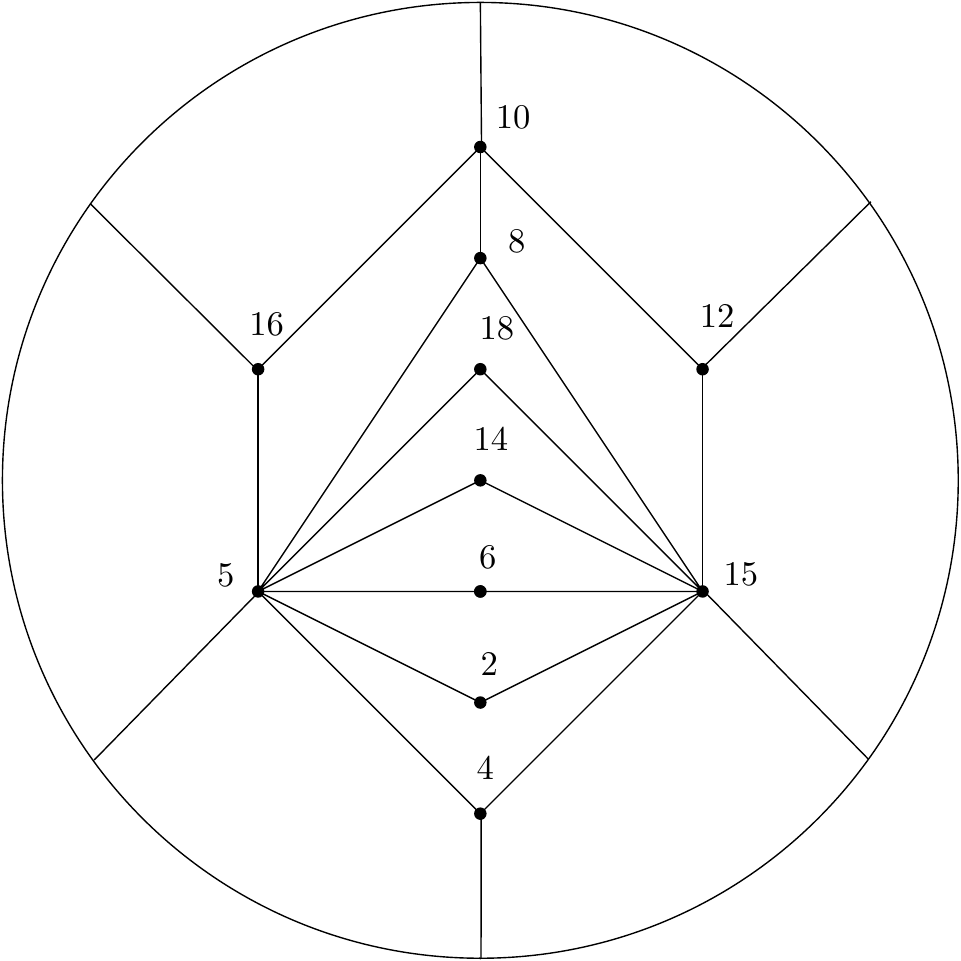}
    \caption{Embedding of ${\mathcal{D}(\mathbb{Z}_{20})}$ in $\mathbb{N}_1$.}
    \label{Z20c}
\end{figure}

If $G\cong \mathbb{Z}_2\times \mathbb{Z}_2 \times  \mathbb{Z}_5$, then by Lemma \ref{lemma 3}, $\d \cong K_{3,4}$. Consequently, $\gamma(\d)=1$ and $\overline{\gamma}(\d)=1$.

\vspace{.05cm}
      \textbf{Subcase-4.2.3:} \emph{$|P_1|=4$ and $|P_2|=7$}. Then $G$ is isomorphic to  $ \mathbb{Z}_{4}\times \mathbb{Z}_{7}$ or $ \mathbb{Z}_2\times \mathbb{Z}_2 \times  \mathbb{Z}_7$. If $G \cong \mathbb{Z}_{4}\times \mathbb{Z}_{7}$, then $\d$ contains a subgraph which is isomorphic to $K_{3,6}$ (see Lemma \ref{lemma 2}). Consequently, $\gamma(\d)\geq 1$ and $\overline{\gamma}(\d)\geq 2$. A genus $1$ and cross-cap $2$ drawings of $\mathcal{D}(\mathbb{Z}_{28})$ are given in Figure \ref{Z28}
 and \ref{Z28c}, respectively.  
 
\begin{figure}[ht]
    \centering
    \includegraphics[width=0.50\textwidth]{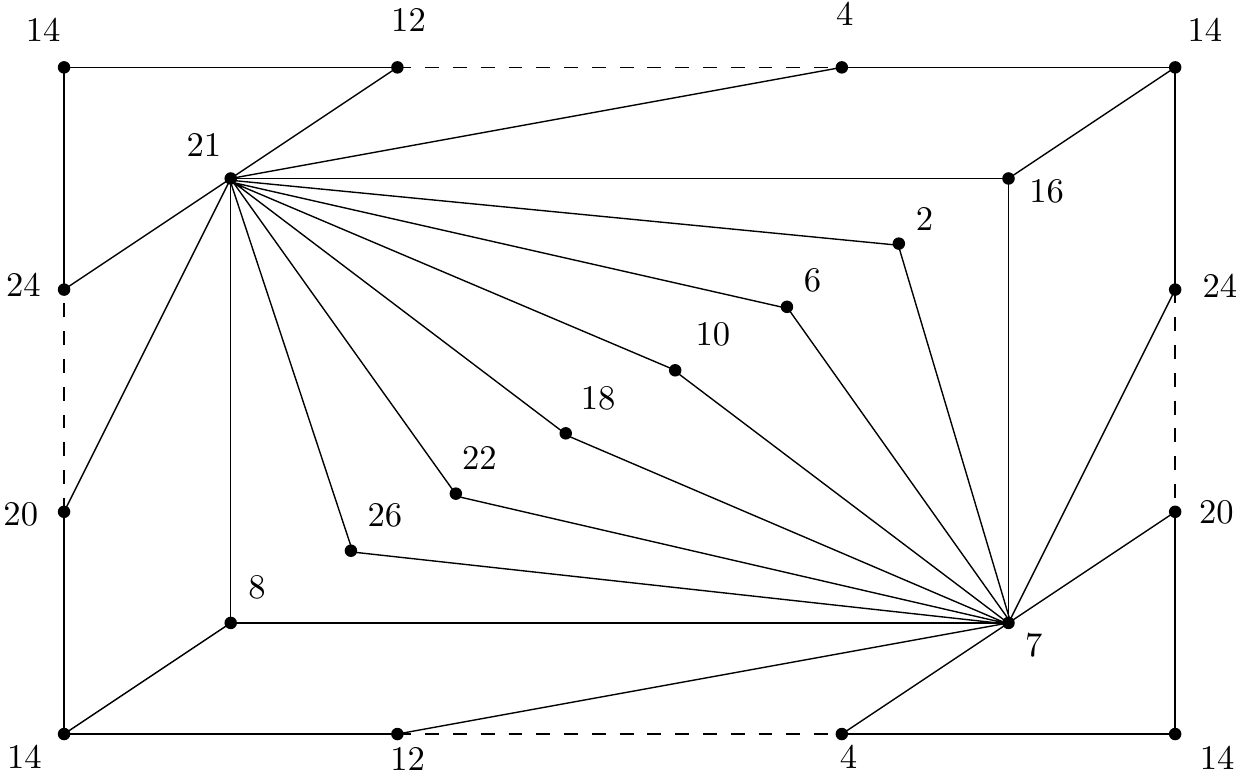}
    \caption{Embedding of ${\mathcal{D}(\mathbb{Z}_{28})}$ in $\mathbb{S}_1$.}
    \label{Z28}
\end{figure}
 \begin{figure}[ht]
    \centering
    \includegraphics[width=0.50\textwidth]{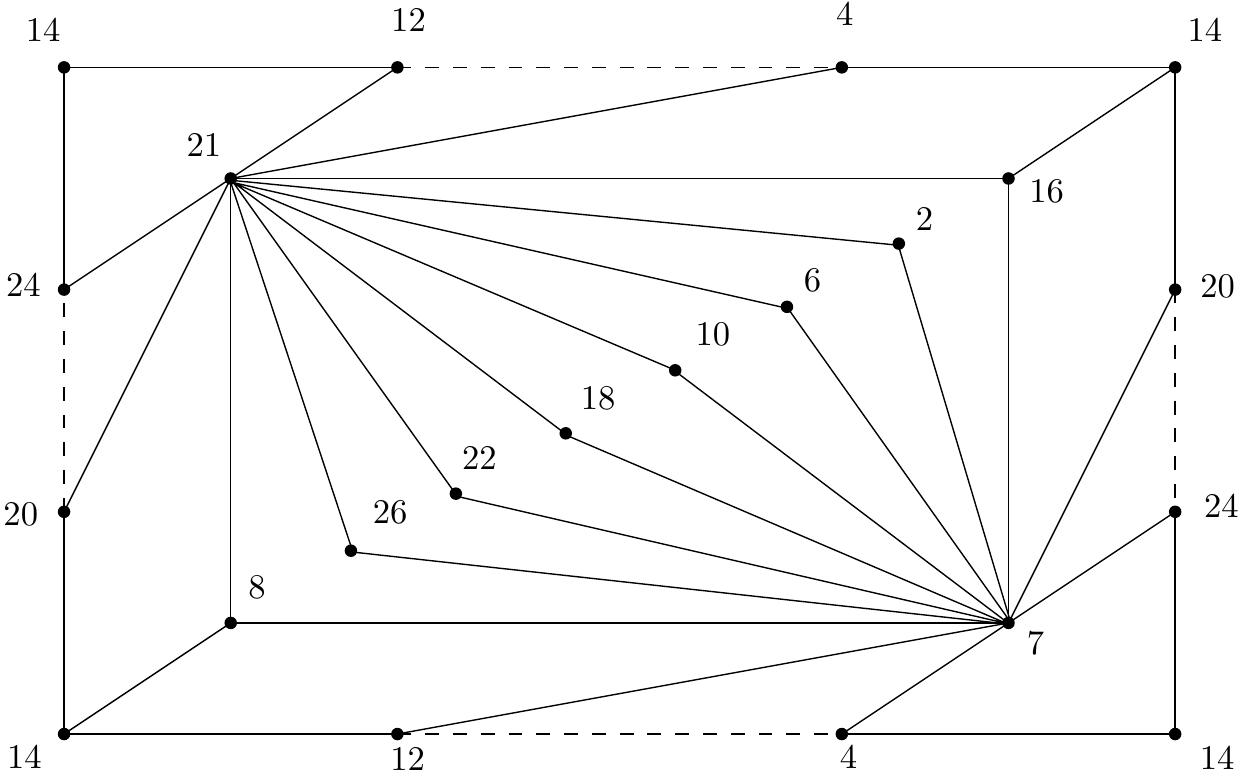}
    \caption{Embedding of ${\mathcal{D}(\mathbb{Z}_{28})}$ in $\mathbb{N}_2$.}
    \label{Z28c}
\end{figure}

If $G\cong \mathbb{Z}_2\times \mathbb{Z}_2 \times  \mathbb{Z}_7$, then by Lemma \ref{lemma 2}, $\d \cong K_{3,6}$. It follows that $\gamma(\d)= 1$ and $\overline{\gamma}(\d)= 2$.

\vspace{.05cm}
       \textbf{Subcase-4.2.4:} \emph{$|P_1|=4$ and $|P_2|=9$}. Then either $P_1\cong \mathbb{Z}_2\times \mathbb{Z}_2$ or $P_1\cong \mathbb{Z}_4$. Also, $P_2$ is isomorphic to $\mathbb{Z}_3\times \mathbb{Z}_3$ or $ \mathbb{Z}_9$. Consequently, $G$ is isomorphic to one of the groups: $\mathbb{Z}_{36}$, $\mathbb{Z}_2\times \mathbb{Z}_2\times \mathbb{Z}_9$, $\mathbb{Z}_4\times \mathbb{Z}_3\times \mathbb{Z}_3$, $\mathbb{Z}_2 \times \mathbb{Z}_2 \times \mathbb{Z}_3\times \mathbb{Z}_3$. If $G\cong \mathbb{Z}_{36}$, then $\a$ is a complete graph. Consider the set $S= \{x\in G : o(x)=9\}$ and $T=\{y\in G : o(y)\in \{2,4,6\}\}$. Let $x'\in S$ and $y'\in T$. Clearly, $x'\sim y'$ in $\a$ and by Remark \ref{order not divide}, $x'\nsim y'$ in $\mathcal{P}(G)$. Thus, $x'\sim y'$ in $\d$ and so  $\d$ contains a subgraph which is isomorphic to $K_{|S|,|T|}$. Since $|S|=6$ and $|T|=5$, we obtain $\gamma(\d)\geq 3$ and $\overline{\gamma}(\d)\geq 6$.\\
       If $G\cong \mathbb{Z}_2\times \mathbb{Z}_2\times \mathbb{Z}_9$, then note that $G$ has exactly $3$ maximal cyclic subgroups $H_1=\langle (1,0,1)\rangle$, $H_2=\langle (0,1,1) \rangle$ and $H_3= \langle (1,1,1)\rangle$. Consider the set $S=\{x\in G: o(x)=9\}$ and $T=\{y\in G : o(y)\in \{2,6\}\}$. Then observe that $S\subseteq (H_1\cap H_2\cap H_3)$, $|S|=6$ and $|T|=9$.  Suppose $x'\in S$ and $y'\in T$ are arbitrary elements. Then $x',y'\in H_i$ for some $i\in \{1,2,3\}$. It follows that $x'\sim y'$ in $\a$. Consequently, $x'\sim y'$ in $\d$ (cf. Remark \ref{order not divide}). Thus,  $\d$ contains a subgraph isomorphic to $K_{6,9}$. Hence, $\gamma(\d)\geq 7$ and $\overline{\gamma}(\d)\geq 14$.\\
       If $G\cong \mathbb{Z}_4\times \mathbb{Z}_3\times \mathbb{Z}_3$, then by Lemma \ref{lemma 1}, $\d$ contains a subgraph which is isomorphic to $K_{3,8}$. Thus, $\gamma(\d)\geq 2$ and $\overline{\gamma}(\d)\geq 3$. A genus $2$ drawing of $\d$ is given in Figure \ref{Z4Z3Z3}. \\
       \begin{figure}[ht]
    \centering
    \includegraphics[width=0.50\textwidth]{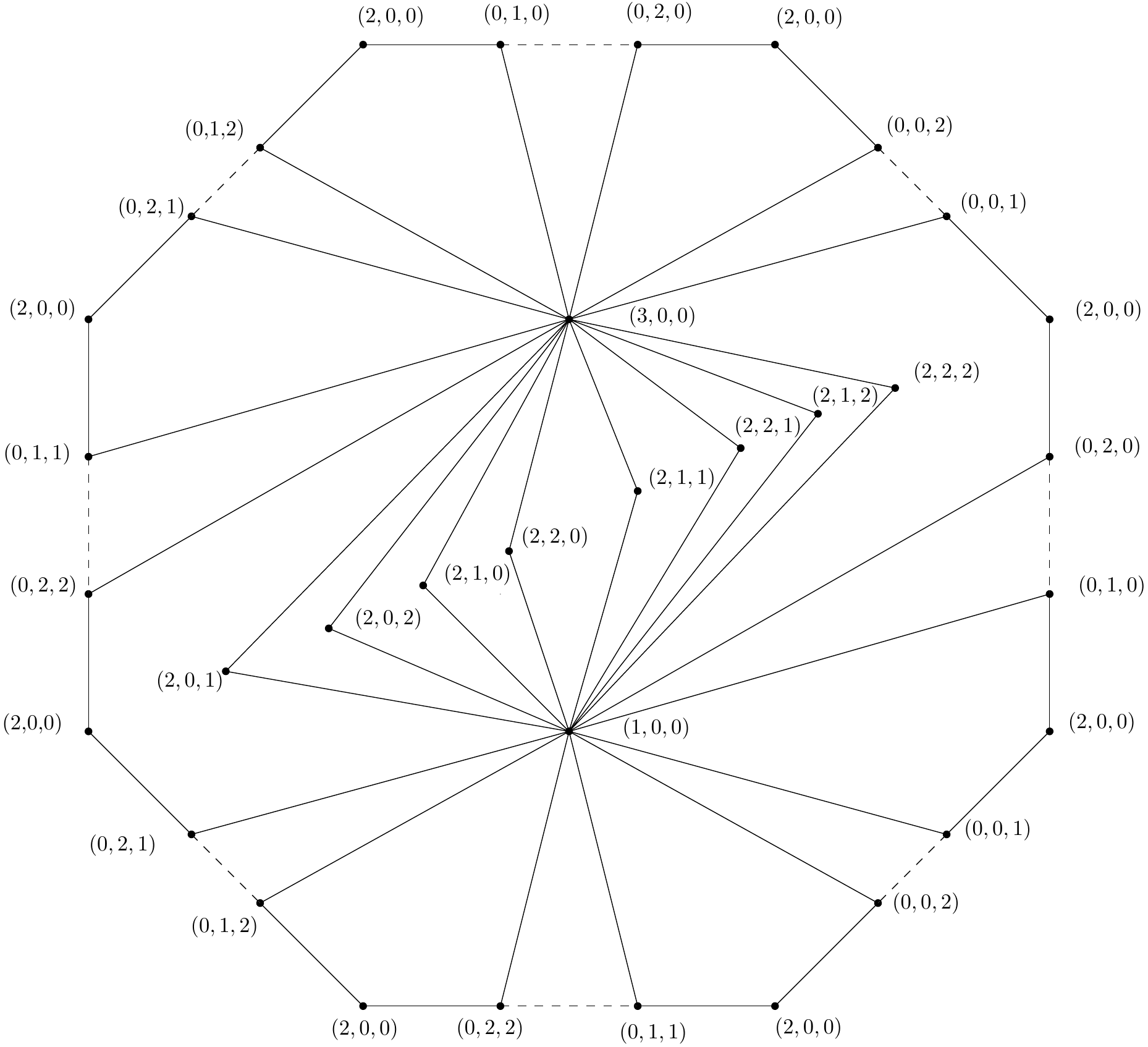}
    \caption{Embedding of ${\mathcal{D}(\mathbb{Z}_4\times \mathbb{Z}_3\times \mathbb{Z}_3)}$ in $\mathbb{S}_2$.}
    \label{Z4Z3Z3}
\end{figure} 
     If  $G \cong \mathbb{Z}_2 \times \mathbb{Z}_2 \times \mathbb{Z}_3\times \mathbb{Z}_3$ then by Lemma \ref{lemma 2}, we have $\d \cong K_{3,8}$. Consequently, $\gamma(\d)= 2$ and $\overline{\gamma}(\d)= 3$.
       
\vspace{.05cm}
       \textbf{Subcase-4.2.5:} \emph{$|P_1|=4$ and $|P_2|=11$}. Thus, either $G\cong \mathbb{Z}_{4}\times \mathbb{Z}_{11} \cong \mathbb{Z}_{44}$ or $G\cong \mathbb{Z}_2\times \mathbb{Z}_2 \times  \mathbb{Z}_{11}$. If $G \cong \mathbb{Z}_{4}\times \mathbb{Z}_{11}$, then the subgraph of $\d$ induced by the set $(P_1\cup P_2)\setminus \{e\}$ contains a subgraph isomorphic to $K_{3,10}$. Consequently, $\gamma(\d)\geq 2$ and $\overline{\gamma}(\d)\geq 4$. A genus $2$ drawing of ${\mathcal{D}(\mathbb{Z}_{44})}$ is given in Figure \ref{Z44}.  \\ 
 If $G\cong \mathbb{Z}_2\times \mathbb{Z}_2 \times  \mathbb{Z}_{11}$, then $\d \cong K_{3,10}$ (cf. Lemma \ref{lemma 2}) and so $\gamma(\d)= 2$, $\overline{\gamma}(\d)= 4$.
 
\begin{figure}[ht]
    \centering
    \includegraphics[width=0.50\textwidth]{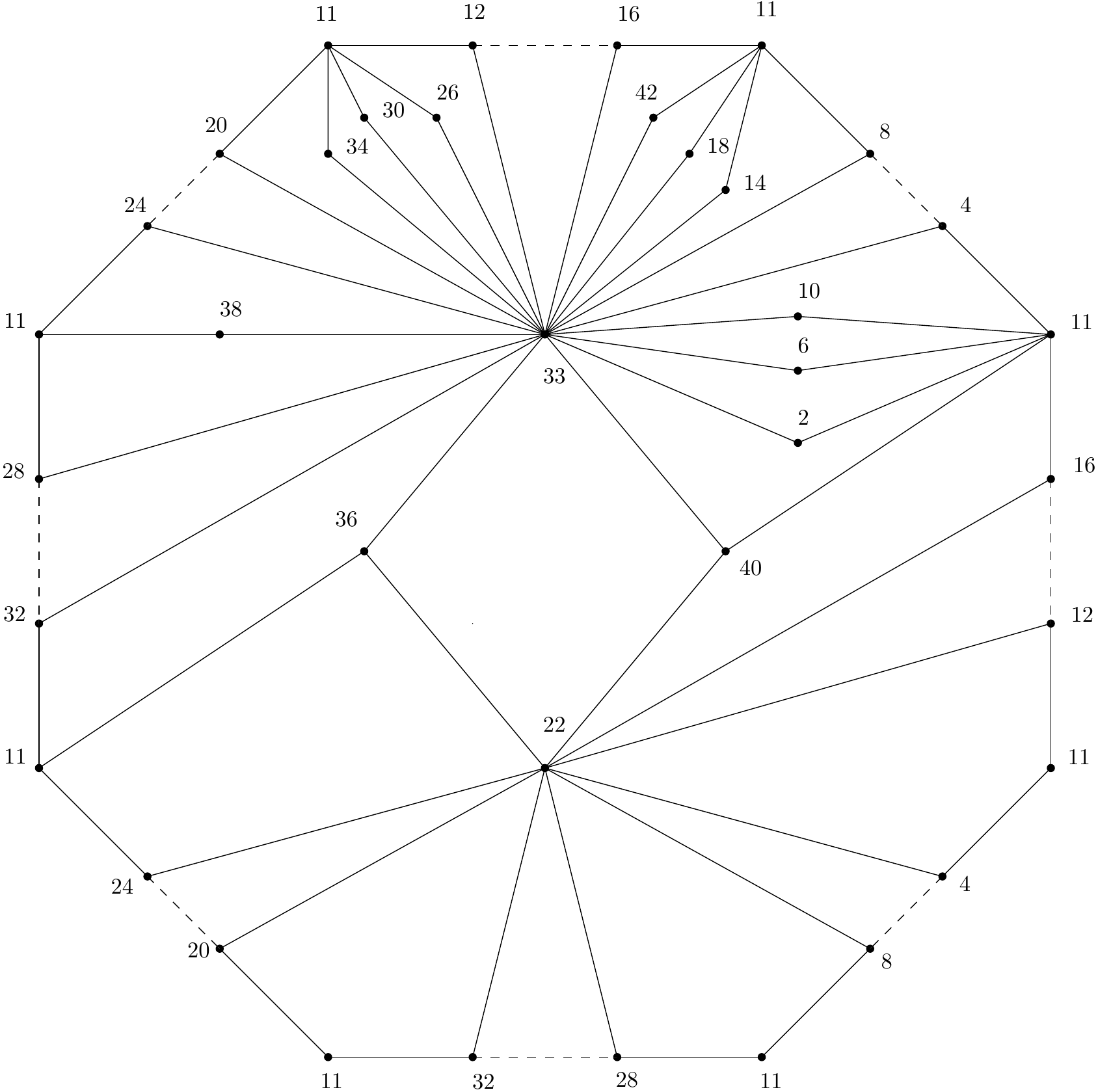}
    \caption{Embedding of ${\mathcal{D}(\mathbb{Z}_{44})}$ in $\mathbb{S}_2$.}
    \label{Z44}
\end{figure}

\vspace{.05cm}
     \textbf{Subcase-4.2.6:} \emph{$|P_1|=4$ and $|P_2|=p_2$, where $p_2\geq 13$}. By Lemma \ref{lemma 1}, $\d$ contains a subgraph isomorphic to $K_{3,12}$. Thus, $\gamma(\d)\geq 3$ and $\overline{\gamma}(\d)\geq 5$.

\vspace{.05cm}
     \textbf{Subcase-4.2.7:} \emph{$|P_1|=4$ and $|P_2|=p_2^{\alpha}$, where $p_2\geq 5$, $\alpha \geq 2$ and $\mathrm{exp}(P_2)=p_2$}. By Lemma \ref{lemma 1}, $\d$ contains a subgraph isomorphic to $K_{3,24}$. Thus, $\gamma(\d)\geq 6$ and $\overline{\gamma}(\d)\geq 11$.

    \vspace{.05cm}
       \textbf{Subcase-4.2.8:} \emph{$|P_1|=4$ and $|P_2|=3^{\alpha}$, where $\alpha \geq 3$}. It implies that the minimum number of non-identity elements in $P_1$ and $P_2$ are $3$ and $26$, respectively. Consequently, by Lemma \ref{lemma 1}, $\d$ contains  a subgraph isomorphic to $K_{3,26}$. It follows that $\gamma(\d)\geq 6$ and $\overline{\gamma}(\d)\geq 12$.

 \vspace{.05cm}
       \textbf{Subcase-4.3.1:} \emph{$|P_1|= 2^{\alpha}$ and $|P_2|=3^{\beta}$, where $\alpha \geq 3$, $\beta \geq 2$}. By Lemma \ref{lemma 1}, the subgraph induced by the set $(P_1\cup P_2)\setminus \{e\}$ contains a subgraph isomorphic to $K_{7,8}$. It follows that $\gamma(\d)\geq 8$ and $\overline{\gamma}(\d)\geq 15$.

\vspace{.05cm}
       \textbf{Subcase-4.3.2:} \emph{$|P_1|= 2^{\alpha}$ $(\alpha \geq 3)$ with $\mathrm{exp}(P_1)=2$ and $|P_2|=3$}. By Theorem \ref{PlanarD}, the graph $\d$ is planar and so $\gamma(\d)=0=\overline{\gamma}(\d)$.

\vspace{.05cm}
        \textbf{Subcase-4.3.3:} \emph{$|P_1|= 2^{\alpha}$  $(\alpha \geq 3)$ with $\mathrm{exp}(P_1)=4$ and $|P_2|=3$}. Consider $P_2=\langle x \rangle$. Further, suppose that $P_1$ has $t \ (\geq 1)$ maximal cyclic subgroups of order $4$ and $s \ (\geq 0)$ maximal cyclic subgroups of order $2$. Consider the maximal cyclic subgroups $M_i=\langle y_i\rangle$, where $1\leq i \leq t$, of order $4$. If $s\geq 1$, then consider $M_j'=\langle z_j\rangle$, where $1\leq j \leq s$, as the maximal cyclic subgroup of order $2$. Consequently, maximal cyclic subgroups of order $12$ in $G$ are of the form $M_iP_2=\{e,x,x^2,y_i,y_i^2,y_i^3,y_ix,y_i^2x,y_i^3x,y_ix^2,y_i^2x^2,y_i^3x^2\}$. Also, maximal cyclic subgroups of order $6$ are of the form $M_j'P_2=\{e,x,x^2,z_j, z_jx,z_jx^2\}$ (see {\rm \cite[Lemma 2.11]{a.chattopadhyay2021minimal}}). Notice that $P_2$ contained in every maximal cyclic subgroup of $G$. Also, the identity element $e$ and the generators of $M_iP_2$ and $M_j'P_2$ do not belong to the vertex set of $\d$ (cf. Proposition \ref{proposition 2.1}). 
        
        If $t=1$, then by Lemma \ref{lemma 4}, we have $P_1=\mathbb{D}_8$. Thus, $G\cong \mathbb{D}_8\times \mathbb{Z}_3$. By Theorem \ref{PlanarD}, $\d$ becomes planar  and so $\gamma(\d)= 0= \overline{\gamma}(\d)$. We may now suppose that $t>1$. In view of Lemma \ref{lemma 3}, now we discuss this subcase into the following six further subcases. 
        \vspace{.05cm}
        
\; \; \;        \textbf{Subcase-4.3.3(a):} \emph{$P_1$ contains two maximal cyclic subgroups $H$ and $K$ of order $4$ such that $|H\cap K|=2$, and the intersection of any other pair of maximal cyclic subgroups of $P_1$ is trivial}. Without loss of generality, assume that $H=M_1$ and $K=M_2$.
        Since $|M_1\cap M_2|=2$, we get $M_1P_2\cap M_2P_2=\{e,x,x^2,y_1^2,y_1^2x, y_1^2x^2\}$. Consider the set $S=\{y\in M_1P_2\cup M_2P_2 : o(y)=4\}$ and $T=\{z\in M_1P_1\cup M_2P_2 : o(z)\in \{3,6\}\}$. Let $x'\in S$ and $y'\in T$. Then $x',y'\in M_iP_2$ for some $i\in \{1,2\}$. Thus, $x'\sim y'$ in $\a$. Consequently, $x'\sim y'$ in $\d$ (cf. Remark \ref{order not divide}). Since $|S|=4$ and $|T|=4$, the subgraph induced by $S\cup T$ is isomorphic to $K_{4,4}$. Thus,  $\gamma(\d)\geq 1$ and $\overline{\gamma}(\d)\geq 2$. A genus $1$ and cross-cap $2$ drawings of $\d$ are given in Figures \ref{PZ_3g} and \ref{PZ_3c}, respectively. The graph $G_3$, given in Figure \ref{F_i}, can be inserted in the face $F$ of Figure \ref{PZ_3g} and \ref{PZ_3c}, respectively, without edge crossings.
          \begin{figure}[ht]
    \centering
    \includegraphics[width=0.50\textwidth]{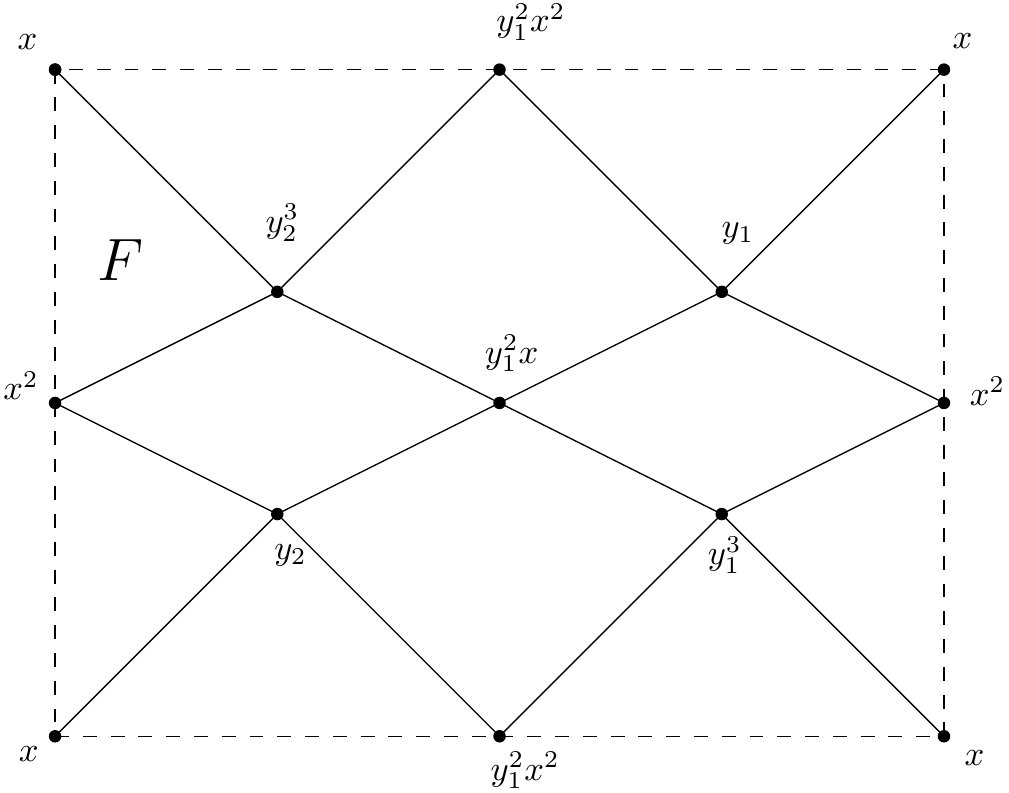}
    \caption{Embedding of ${\mathcal{D}(P_1\times P_2)}$, where $P_1$ and $P_2$ are according to Subcase-4.3.3(a),  in $\mathbb{S}_1$.}
    \label{PZ_3g}
\end{figure} 

          \begin{figure}[ht]
    \centering
    \includegraphics[width=0.50\textwidth]{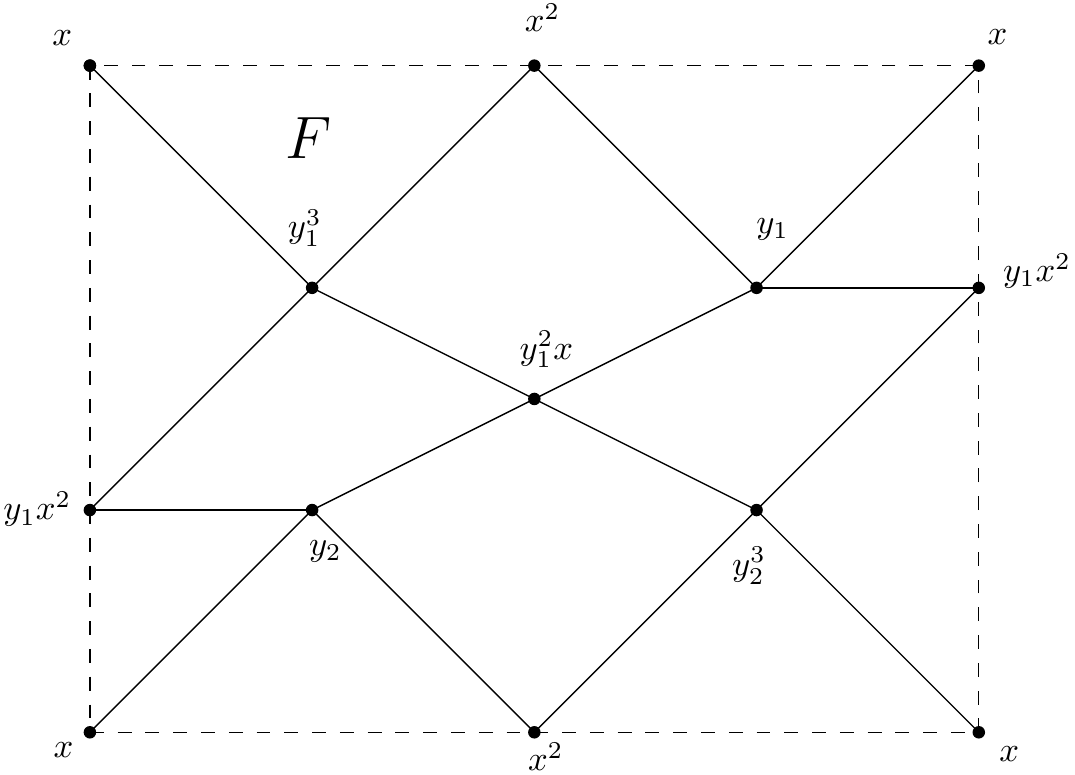}
    \caption{Embedding of ${\mathcal{D}(P_1\times P_2)}$, where $P_1$ and $P_2$ are according to Subcase-4.3.3(a),  in $\mathbb{N}_2$.}
    \label{PZ_3c}
\end{figure} 

\vspace{.05cm}
  \; \; \; \textbf{Subcase-4.3.3(b):}  \emph{$P_1$ contains four maximal cyclic subgroups $H_1, H_2, H_3$ and $H_4$ of order $4$ such that $|H_1\cap H_2|=|H_3\cap H_4|=2$, and the intersection of any other pair of maximal cyclic subgroups of $P_1$ is trivial}. Without loss of generality, assume that $H_i=M_i$ for $1\leq i \leq 4$.
  Now similar to the Subcase-4.3.3(a), we get a subgraph $\Gamma'$ of $\d$, which is isomorphic to $K_{4,4}$. Moreover, $|M_3\cap M_4|=2$. It implies that $M_3P_2\cap M_4P_2=\{e,x,x^2,y_3^2,y_3^2x, y_3^2x^2\}$. 
  Now to embed $\d$ through $\Gamma'$ in $\mathbb{S}_1$, first we insert the vertices $y_3,y_3^3,y_4,y_4^3, y_3^2x, y_3^2x^2$ and their incident edges. Since $y_3\sim y_3^2x\sim y_3^3\sim y_3^2x^2\sim y_4$ and $y_3^2x\sim y_4^3\sim  y_3^2x^2$; all these vertices must be inserted in the same face $F'$. Note that the vertices $y_3,y_3^3,y_4$ and $y_4^3$ are adjacent to both the vertices $x$ and $x^2$ (see Proposition \ref{proposition 2.1}). Consequently, the face $F'$ must contain the vertices $x$ and $x^2$. After inserting the vertices $y_3,y_3^3,y_4,y_4^3$ and their incident edges, it is impossible to insert the vertices $y_3^2x, y_3^2x^2$ without edge crossing (see Figure \ref{The face $F'$}). Thus, $\gamma(\d)\geq 2$. A genus $2$ drawing of $\d$ is given in Figure \ref{pz3g2}, and the subgraph $G_5$, (given in Figure \ref{F_i}) can be inserted in the face $F$. By the similar argument used earlier in this subcase,  any embedding of $\d$ in $\mathbb{N}_2$ is also not possible without edge crossings. Hence, $\overline{\gamma}(\d)\geq 3$.

         \begin{figure}[ht]
    \centering
    \includegraphics[width=0.75\textwidth]{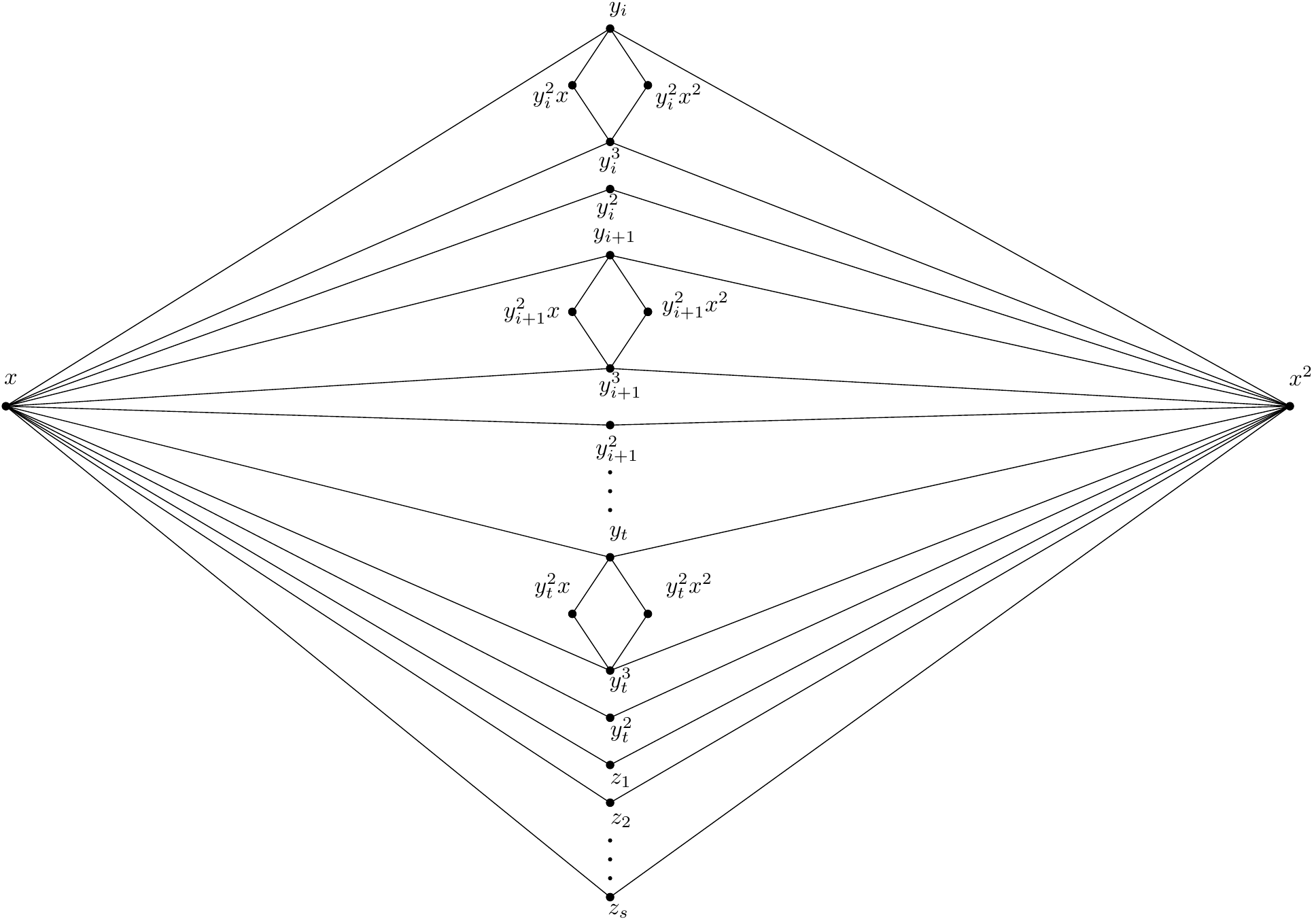}
    \caption{The subgraph $G_i$ of $\d$ induced by the set $(V(\d)\setminus (\bigcup \limits_{j=1}^{i-1} M_jP_2))\cup \{x,x^2\}$.}
    \label{F_i}
\end{figure} 

    \begin{figure}[ht]
    \centering
    \includegraphics[width=0.5\textwidth]{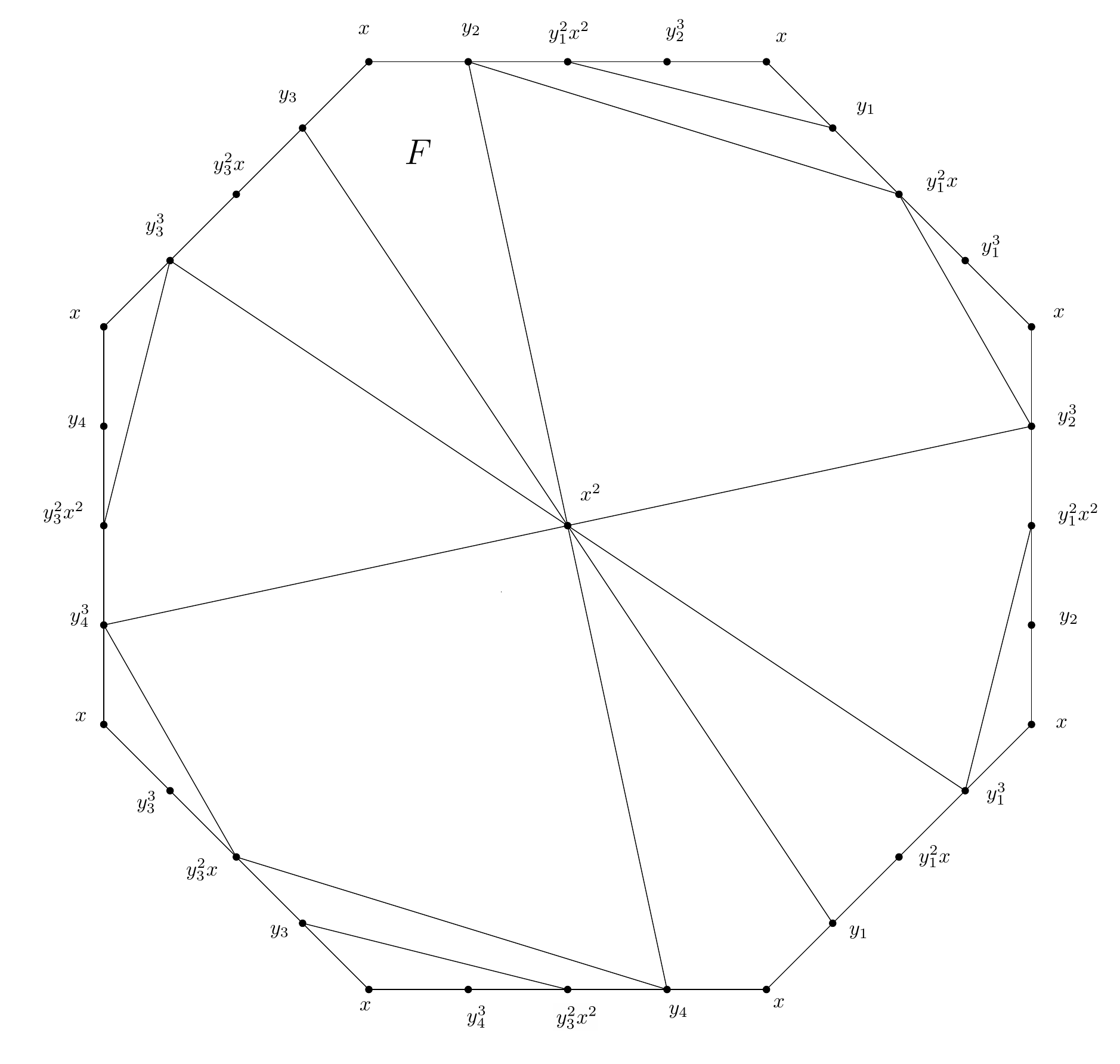}
    \caption{Embedding of ${\mathcal{D}(P_1\times P_2)}$, where $P_1$ and $P_2$ are according to Subcase-4.3.3(b),  in $\mathbb{S}_2$.}
    \label{pz3g2}
\end{figure}

         \begin{figure}[ht]
    \centering
    \includegraphics[width=0.5\textwidth]{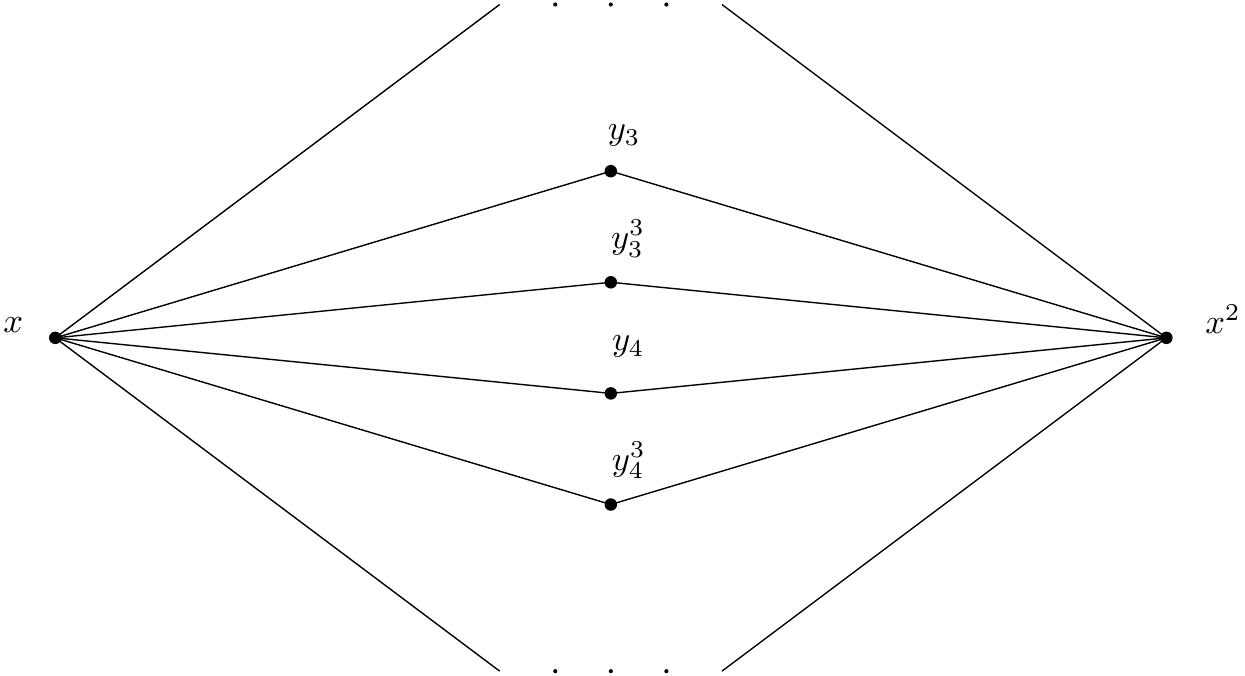}
    \caption{The face $F'$}
    \label{The face $F'$}
\end{figure}

\vspace{.05cm}
          \; \; \; \textbf{Subcase-4.3.3(c):}  \emph{$P_1$ contains six maximal cyclic subgroups $H_1, H_2, H_3, H_4, H_5$ and $H_6$ of order $4$ such that $|H_1\cap H_2|=|H_3\cap H_4|=|H_5\cap H_6|=2$, and the intersection of any three maximal cyclic subgroups of $P_1$ is trivial}. Without loss of generality, assume that $H_i=M_i$ for each $i \in [6]$.
          Similar to the Subcase-4.3.3(b), we obtain a subgraph $\Gamma''$ of $\d$, induced by the set  $V(\d)\cap (M_1P_2\cup M_2P_2\cup M_3P_2\cup M_4P_2)$, which can not be embedded in $\mathbb{S}_1$ and $\mathbb{N}_2$. It follows that $\gamma(\d)\geq 2$ and $\overline{\gamma}(\d)\geq 3$. Also, we have $M_5P_2\cap M_6P_2=\{e,x,x^2,y_5^2,y_5^2x, y_5^2x^2\}$. Suppose $\gamma(\d)= 2$. To embed $\d$ in $\mathbb{S}_2$, first we insert the vertices $y_5,y_5^3,y_6,y_6^3, y_5^2x, y_5^2x^2$ and their incident edges in genus $2$ drawing of $\Gamma'$. By the similar argument used in Subcase-4.3.3(b) (by taking $y_5$ in place of $y_3$, and $y_4$ in place of $y_6$) it is impossible to insert these vertices in $\mathbb{S}_2$ without edge crossings.
          Thus, $\gamma(\d)\geq 3$.

\vspace{.05cm}
\; \; \;  \textbf{Subcase-4.3.3(d):} \emph{$P_1$ contains three maximal cyclic subgroups $H_1, H_2$ and $H_3$ of order $4$ such that $|H_1\cap H_2\cap H_3|=2$, and the intersection of any other pair of maximal cyclic subgroups of $P_1$ is trivial}.  Without loss of generality, assume that $H_i=M_i$ for $1\leq i \leq 3$.
  Since $|M_1\cap M_2\cap M_3|=2$, we obtain $M_1P_2\cap M_2P_2\cap M_3P_2=\{e,x,x^2,y_1^2,y_1^2x, y_1^2x^2\}$. Consider the set $S=\{y\in M_1P_2\cup M_2P_2 \cup M_3P_2 : o(y)=4\}$ and $T=\{z\in M_1P_2\cup M_2P_2\cup M_3P_2: o(z)\in \{3,6\}\}$. Note that  the subgraph $\Gamma'$, induced by the set $S\cup T$ is isomorphic to $K_{6,4}$. Thus, $\overline{\gamma}(\d)\geq 4$ and ${\gamma}(\d)\geq 2$. 
Now we show that $\d$ can be embedded in $\mathbb{S}_2$ without edge crossing. Since $\Gamma'$ is a bipartite graph, it implies that each face of $\Gamma'$ is of even length at least $4$ in $\mathbb{S}_2$. Consequently, each face must contain at least two vertices of each partite set of $\Gamma'$. It follows that there exists an embedding of $\Gamma'$ in $\mathbb{S}_2$ such that the face $F_2$ contains the vertices $x$ and $x^2$. Now one can embed $\d$ in $\mathbb{S}_2$ through $\Gamma'$ by inserting the subgraph $G_4$ into $F_2$. Therefore,  ${\gamma}(\d)= 2$.

\vspace{.05cm}
 \; \; \; \textbf{Subcase-4.3.3(e):} \emph{$P_1$ contains five maximal cyclic subgroups $H_1, H_2,H_3, H_4$ and $H_5$ of order $4$ such that $|H_1\cap H_2\cap H_3|=2=|H_4\cap H_5|$, and the intersection of any four maximal cyclic subgroups of $P_1$ is trivial}. Without loss of generality, assume that $H_i=M_i$ for each $i\in [5]$.
 Similar to the Subcase-4.3.3(d), we obtain a subgraph $\Gamma'$ of $\d$, induced by the set  $V(\d)\cap (M_1P_2\cup M_2P_2\cup M_3P_2)$, which can not be embedded in $\mathbb{S}_1$ and $\mathbb{N}_3$ without edge crossing. It follows that $\gamma(\d)\geq 2$ and $\overline{\gamma}(\d)\geq 3$. Moreover, $|M_4\cap M_5|=2$ implies that $M_4P_2\cap M_5P_2=\{e,x,x^2,y_4^2,y_4^2x, y_4^2x^2\}$. Suppose $\gamma(\d)=2$.  Now to embed $\d$ in $\mathbb{S}_2$, first we insert the vertices $y_4,y_4^3,y_5,y_5^3, y_4^2x, y_4^2x^2$ and their incident edges in genus $2$ drawing of $\Gamma'$.  By the similar argument used in the Subcase-4.3.3(b) (by taking $y_4$ in place of $y_3$, and $y_5$ in place of $y_6$) it is impossible to insert these vertices in $\mathbb{S}_2$ without edge crossings.
 It follows that $\gamma(\d)\geq 3$.

\vspace{.05cm}
 \; \; \; \textbf{Subcase-4.3.3(f):}
\emph{$P_1$ contains four maximal cyclic subgroups $H_1, H_2, H_3$ and $H_4$ of order $4$ such that $|H_1\cap H_2\cap H_3\cap H_4|=2$}. Without loss of generality, assume that $H_i=M_i$ for each $ i\in [4]$.
Since $|M_1\cap M_2\cap M_3\cap M_4|=2$, we obtain $M_1P_2\cap M_2P_2\cap M_3P_2\cap M_4P_2=\{e,x,x^2,y_1^2,y_1^2x, y_1^2x^2\}$. Consider the set $S=\{y\in M_1P_2\cup M_2P_2 \cup M_3P_2\cup M_4P_2 : o(y)=4\}$ and $T=\{z\in M_1P_2\cup M_2P_2\cup M_3P_2\cup M_4P_2: o(z)\in \{3,6\}\}$. Note that  the subgraph of $\d$ induced by $S\cup T$ has a subgraph isomorphic to $K_{8,4}$. Thus, $\overline{\gamma}(\d)\geq 3$ and ${\gamma}(\d)\geq 6$. 
  
\vspace{.05cm}
       \textbf{Subcase-4.3.4:} \emph{$|P_1|= 2^{\alpha}$ and $|P_2|=3$, where $\alpha \geq 3$ and $\mathrm{exp}(P_1)\geq 8$}. Then there exists an element $y\in P_2$ such that $o(y)=8$. Suppose $x\in P_1$ such that $o(x)=3$. Notice that $\langle xy\rangle$ is a cyclic subgroup of order $24$ in $G$. Consider the sets $S=\{s\in \langle xy\rangle : o(s)=8\}$ and $T=\{t\in \langle xy\rangle : o(t)\in \{3,6,12\}\}$. Let $x' \in S$ and $y' \in T$. Observe that, $x'\sim y'$ in $\a$. Consequently, by Remark \ref{order not divide}, $x'\sim y'$ in $\d$. Thus, $\d$ contains a subgraph isomorphic to $K_{|S|,|T|}$. Since $|S|= 4$ and $|T|= 8$, we obtain $\gamma(\d)\geq 3$ and $\overline{\gamma}(\d)\geq 6$.

\vspace{.05cm}
       \textbf{Subcase-4.3.5:} \emph{$|P_1|= 2^{\alpha}$ and $|P_2|=p_2$ such that $\alpha \geq 3$, $p_2\geq 5$}. Then $P_1$ has at least $7$ non-identity elements and $P_2$ has at least $4$ non-identity elements. Consequently, by Lemma \ref{lemma 1}, $\d$ contains a subgraph which is isomorphic to $K_{4,7}$. It follows that $\gamma(\d)\geq 3$ and $\overline{\gamma}(\d)\geq 5$.

This completes our proof.

\section*{Declarations}

\textbf{Funding}: The first author gratefully acknowledge for providing financial support to CSIR  (09/719(0110)/2019-EMR-I) government of India. The second author wishes to  acknowledge the support of Core Research Grant (CRG/2022/001142) funded by  SERB. 

\vspace{.3cm}
\textbf{Conflicts of interest/Competing interests}: There is no conflict of interest regarding the publishing of this paper. 

\vspace{.3cm}
\textbf{Availability of data and material (data transparency)}: Not applicable.

\vspace{.3cm}
\textbf{Code availability (software application or custom code)}: Not applicable.

\vspace{1cm}
\noindent
{\bf Parveen\textsuperscript{\normalfont 1}, \bf Jitender Kumar\textsuperscript{\normalfont 1}}
\bigskip

\noindent{\bf Addresses}:

\vspace{5pt}

\end{document}